\pdfoutput=1                      

\documentclass[10pt,a4paper]{amsart}

\usepackage[accsupp]{axessibility}    

\usepackage{latexsym,amsfonts,amsmath,amsthm,amssymb, graphicx, mathrsfs, fancyhdr, 
amscd, 
fontenc,verbatim,stix,amsfonts,amsthm,indentfirst}

\numberwithin{equation}{section} 
\usepackage[all]{xy}
\usepackage{braket}

\usepackage{color,xcolor}

\usepackage{amssymb}

\usepackage[draft=false,setpagesize=false,pdfstartview=FitH,
colorlinks=true,citecolor=blue]{hyperref}



%
%
%

\theoremstyle{definition}
\newtheorem{definition}{Definition}[section]
\newtheorem{example}[definition]{Example}

\theoremstyle{plain}
\newtheorem{proposition}[definition]{Proposition}
\newtheorem{lemma}[definition]{Lemma}
\newtheorem{theorem}[definition]{Theorem}

\newtheorem{conjecture}{Conjecture}
\newtheorem*{conjecture*}{Conjecture}
\newtheorem{question}[conjecture]{Question}
\theoremstyle{definition}
\newtheorem{remark}[definition]{Remark}

\addtolength{\textwidth}{1cm}




\newcommand{\diver}{\mathrm{div}}

\DeclareMathOperator{\sh}{\mathrm{sh}}
\DeclareMathOperator{\ch}{\mathrm{ch}}

\newcommand{\UD}{\mathrm{UD}}
\newcommand{\BD}{\mathrm{BD}}

\newcommand{\cut}{\mathrm{cut}}

\newcommand{\ess}{\mathrm{ess}}

\newcommand{\eps}{\varepsilon}

\newcommand{\vol}{\mathrm{vol}}
\newcommand{\di}{\mathrm{d}}

\newcommand{\R}{\mathbb R}

\newcommand{\HH}{\mathbb H}


\newcommand{\lip}{\mathrm{Lip}}
\newcommand{\loc}{\mathrm{loc}}
\newcommand{\disp}{\displaystyle}

\newcommand{\gr}{\mathcal{G}}

\newcommand{\sgrh}{\mathscr{G}_{\Hc}}

\newcommand{\Ric}{\mathrm{Ric}}
\newcommand{\Sec}{\mathrm{Sec}}

\newcommand{\roy}{\mathsf{Ro}}

\newcommand{\Hc}{\mathrm{H}}

\makeatletter      
\newcommand*\owedge{\mathpalette\@owedge\relax}
\newcommand*\@owedge[1]{
	\mathbin{
		\ooalign{
			$#1\m@th\bigcirc$\cr
			\hidewidth$#1\m@th\wedge$\hidewidth\cr
		}
	}
}
\makeatother

\newcommand{\supp}{\operatorname{supp}}

\newcommand{\Sph}{\mathbb{S}}

\allowdisplaybreaks[1]

\begin{document}

\author{Luciano Mari}
\address{Dipartimento di Matematica ``F. Enriques", Universit\`a degli studi di Milano, Via Saldini 50, I-20133 Milano, Italy.}
\email{luciano.mari@unimi.it}

\author{Marcos Ranieri}
\address{Instituto de Matem\'{a}tica, Universidade Federal de Alagoas, 57072-970 Macei\'{o}, Alagoas, Brazil}
\email{marcos.ranieri@im.ufal.br}

\author{Elaine Sampaio}
\address{Instituto de Matem\'{a}tica, Universidade Federal de Alagoas, 57072-970 Macei\'{o}, Alagoas, Brazil}
\email{elaine.carlos@im.ufal.br}

\author{Feliciano Vitório}
\address{Instituto de Matem\'{a}tica, Universidade Federal de Alagoas, 57072-970 Macei\'{o}, Alagoas, Brazil}
\email{feliciano@pos.mat.ufal.br}

\title{Spectral rigidity of manifolds with Ricci bounded below and maximal bottom spectrum}
\date{\today}
\maketitle    

\begin{abstract}
We investigate the spectrum of the Laplacian on complete, non-compact manifolds $M^n$ whose  Ricci curvature satisfies 
\(
\Ric \geq -(n-1)\Hc(r),
\)
for some continuous, non-increasing $\Hc$ with $\Hc-1 \in L^1(\infty)$. We prove that if the bottom spectrum attains the maximal value $\frac{(n-1)^2}{4}$ compatible with the curvature bound, 
then the spectrum of $M$ coincides with that of $\mathbb{H}^n$, namely,
\(
\sigma(M) = \left[ \frac{(n-1)^2}{4}, \infty \right).
\)
The result can be localized to an end $E$ with infinite volume.
\end{abstract} 
\vspace{0.5cm}

\noindent
\textbf{MSC2020}: Primary 53B30, 35P15, 58J50; Secondary 53C21, 31C12.

\noindent
\textbf{Keywords:}
Ricci curvature, Spectrum, Laplacian, Green kernel, Asymptotically hyperbolic.

\section{Introduction}

Let \( M \) be a complete, non-compact Riemannian manifold of dimension \( n \ge 2 \), and let \(\Delta \) denote the Laplace-Beltrami operator acting on \( C_c^\infty(M) \), with the agreement that $\Delta = \di^2/\di x^2$ on $\R$. It is known that \( \Delta \) admits a unique self-adjoint extension on a domain \(D(\Delta) \subset L^2(M)\). Hereafter, we denote by $\sigma(M) \subset [0,\infty)$ the spectrum of \( (-\Delta, D(\Delta)) \) and by $\lambda(M) \doteq \inf \sigma(M)$ the bottom spectrum of $M$. We will often refer to $\sigma(M)$ as the spectrum of $M$. We split $\sigma(M)$ into the essential spectrum \( \sigma_\ess(M) \), consisting of the accumulation points of $\sigma(M)$ and of the isolated eigenvalues of infinite multiplicity, and the discrete spectrum $\sigma_{\di}(M) = \sigma(M) \setminus \sigma_{\text{ess}}(M)$. By the decomposition principle, complete manifolds which are isometric outside a compact set have the same essential spectrum (as a set), thus $\sigma_\ess(M)$ only depends on the geometry at infinity of $M$. Eigenvalues $\lambda \in \sigma_\ess(M)$ are called embedded eigenvalues.

Thanks to the works of various authors from the end of the nineties to the beginning of 2010s, the spectrum of manifolds with asymptotically non-negative Ricci curvature is now well-understood as a set. Recall that $\sigma(\R^n) = [0,\infty)$ with no embedded eigenvalues. By works of Li \cite{Jli}, Donnelly \cite{donnelly}, Wang \cite{wang} and Lu \& Zhou \cite{lu_zhou} in increasing generality, the following holds:

\begin{theorem}\label{teo_luzhou}
	Let $M^n$ be a complete non-compact Riemannian manifold satisfying 
	\begin{equation}\label{eq_lowerricci}
		\liminf_{x \to \infty} \Ric_x = 0.
	\end{equation}
	Then, $\sigma(M)$ coincides with that of $\R^n$, i.e. $\sigma(M) = [0,\infty)$.
\end{theorem}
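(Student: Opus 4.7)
The containment $\sigma(M)\subseteq[0,\infty)$ is immediate since $-\Delta$ is a non-negative self-adjoint operator on $L^2$. For the reverse inclusion, my plan is to apply Weyl's criterion: given $\lambda\geq 0$ and $\varepsilon>0$, it suffices to produce $\phi\in C_c^\infty(M)\setminus\{0\}$ satisfying $\|(-\Delta-\lambda)\phi\|_{L^2}\leq\varepsilon\|\phi\|_{L^2}$. By \eqref{eq_lowerricci}, for any prescribed $\delta>0$ and $R>0$ there exists $p\in M$ (arbitrarily far from a fixed origin) with $\Ric\geq-(n-1)\delta^2$ on $B(p,R)$. The Laplacian comparison theorem then gives
\[
\Delta r\leq(n-1)\delta\coth(\delta r)=\tfrac{n-1}{r}+O(\delta^2 r)\quad\text{on }B(p,R),
\]
with $r(x)=d(x,p)$, pointwise off $\cut(p)$ and distributionally on the whole ball via Calabi's trick (smooth upper barriers along variations of minimizing geodesics).

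I then take $\phi$ radial with respect to $p$: $\phi(x)=\chi(r(x))\,g(r(x))$, where $\chi$ is a smooth cutoff supported on a long annulus $[r_0,r_1]\subset(1,R)$ with transition width $L$, and $g(r)=r^{-(n-1)/2}\cos(\sqrt{\lambda}\,r)$ is the Euclidean Bessel model, satisfying $g''+\tfrac{n-1}{r}g'+\lambda g=O(r^{-2})g$. Expanding $\Delta\phi=\phi''+(\Delta r)\phi'$ off $\cut(p)$ and invoking the ODE for $g$ yields the pointwise identity
\[
(-\Delta-\lambda)\phi=\Bigl[\tfrac{n-1}{r}-\Delta r\Bigr]g'\chi+O(r^{-2})g\chi-2g'\chi'-g\chi''-(\Delta r)g\chi'.
\]
The cutoff contributions are made small by choosing $L$ with $1\ll L\ll r_1-r_0$; the $O(r^{-2})$ term by taking $r_0$ large; and the comparison-defect term $[\tfrac{n-1}{r}-\Delta r]g'\chi$ is estimated in $L^2$ using Bishop--Gromov volume comparison together with integration by parts against the radial field $\nabla r$, reducing its contribution to the volume discrepancy $V_\delta(r_1)-\vol(B(p,r_1))$, which vanishes as $\delta r_1\to 0$. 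Sending first $r_0,r_1\to\infty$ and then $\delta\to 0$ in the correct order, the $L^2$-ratio falls below $\varepsilon$.

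The main obstacle is precisely the lack of a two-sided pointwise control on $\Delta r$: only the upper bound is available from $\Ric\geq-(n-1)\delta^2$, so the comparison-defect $[\tfrac{n-1}{r}-\Delta r]$ cannot be estimated pointwise in absolute value. This forces the use of integration by parts and Bishop--Gromov volume comparison to absorb that term in an integrated sense, exploiting the almost-Euclidean behavior of annular volumes when $\delta r_1\ll 1$. A secondary technical point is the cut locus of $p$, where $r$ is not smooth; this is handled by Calabi's trick, noting that the distributional defect of $\Delta r$ has the non-positive sign compatible with our upper bound, so it only improves the estimate. Because $p$ may be chosen arbitrarily far out in $M$, the test functions $\phi$ can be taken with disjoint supports, so the same construction also shows that no $\lambda\geq 0$ is an embedded eigenvalue (otherwise the $\phi$'s would be almost orthogonal to the eigenspace, contradicting finite multiplicity on $[0,\infty)$ only for isolated eigenvalues).
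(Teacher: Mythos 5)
Note first that the paper does not prove this theorem: it is cited from Li, Donnelly, Wang, and Lu--Zhou, with the introduction pointing out the key ingredient (Sturm's theorem on the $L^p$-independence of the spectrum, via the Charalambous--Lu $L^1$ Weyl criterion). Your geometric setup --- radial test functions around a far point $p$ where $\Ric \geq -(n-1)\delta^2$ holds on a large ball, compared against the Euclidean Bessel profile --- is indeed the common starting point of those proofs. But the argument as you have set it up has a genuine gap, and it sits exactly where you flag the ``main obstacle.'' The $L^2$ Weyl criterion cannot be applied to $\phi=\chi(r)g(r)$: away from a pole, $\Delta r$ is only a locally finite signed measure, with a singular part carried by $\cut(p)$; it is not in $L^2_\loc$, and is not even in $L^1_\loc$ when $\mathscr{H}^{n-1}(\cut(p))\neq 0$ (see \cite{mante_masce_ural}). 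Hence $\Delta\phi\notin L^2$ and $\phi\notin D(\Delta)$. Calabi's trick gives only the distributional \emph{upper} bound $\Delta r\le (n-1)\delta\coth(\delta r)$; it neither places $\phi$ in $D(\Delta)$ nor yields any lower bound on $\Delta r$. Independently of the cut locus, the step ``estimate the comparison defect in $L^2$ via Bishop--Gromov and integration by parts'' also fails: integration by parts against $\nabla r$ controls $\int\big(\tfrac{n-1}{r}-\Delta r\big)$ over annuli --- an $L^1$-type quantity exploiting the favourable sign --- but gives no handle on $\int\big(\tfrac{n-1}{r}-\Delta r\big)^2$, and a one-sided Ricci bound simply does not furnish the two-sided pointwise control this would require; indeed $\Delta r$ can blow up near focal points.

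The route around both obstructions, sketched in the paper's introduction, is to replace the $L^2$ Weyl criterion by its $L^1$ variant: for candidates $u_j$ with $\|u_j\|_\infty$ uniformly bounded from above and below, it suffices that $\|\Delta u_j+\lambda u_j\|_1=o(\|u_j\|_2)$, and that this suffices rests on Sturm's theorem, the ``deep result in spectral theory'' the paper highlights. Even then one cannot test directly with $\chi(r)g(r)$: one must mollify $r$ and use the fact that $|\Delta r|$ is a locally finite measure to track the $L^1$ error of the mollification, with the one-sided Laplacian comparison and Bishop--Gromov absorbing the remaining terms. Your proposal omits both the $L^1$ criterion and the mollification, and the argument cannot be completed within the $L^2$ framework. (The closing remark about ruling out embedded eigenvalues should also be dropped: the paper is explicit that $\sigma(M)=[0,\infty)$ is an identity of sets carrying no information about embedded eigenvalues, and the disjoint-supports observation does not establish their absence.)
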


The proof hinges on a deep result in spectral theory by Sturm \cite{sturm}, see also Charalambous \& Lu \cite{CharalambousLu}. We stress that $\sigma(M) = [0,\infty)$ is just an identity between sets, with no claim about the corresponding spectral measures, in particular about the possible presence of embedded eigenvalues in $\sigma(M)$. 

\begin{quote}
	\emph{Hereafter, an identity $\sigma(M) = \sigma(N)$ between the spectrum (or, likewise, the essential spectrum) of two manifolds will be meant as subsets of $\R$, with no claim about the respective embedded eigenvalues.}
\end{quote}

\begin{remark}
	The techniques developed to prove Theorem \ref{teo_luzhou} are also effective for weighted manifolds with non-negative Bakry-Emery Ricci curvature to estimate the spectrum of the drifted Laplacian $\Delta_f$ (and, remarkably, of $\Delta$ itself) under natural assumptions on $f$. The interested reader is referred to \cite{CharalambousLu, lu_zhou} and to the work of Silvares \cite{silvares}.	
\end{remark}

The goal of this paper is to study $\sigma(M)$  when $\Ric$ is bounded below but does not satisfy \eqref{eq_lowerricci}. Precisely, we  will consider manifolds with
\begin{equation}\label{eq_assu_ricc}
\Ric \ge - (n-1)\Hc(r),
\end{equation}
for some continuous function $\Hc(r) \to 1$ as $r \to \infty$. Here, and hereafter in the paper, $r$ is the distance from a fixed origin $o \in M$ and the inequality is intended in the sense of quadratic forms. An example is given by the hyperbolic space of curvature $-1$, whose spectrum satisfies
\[
\sigma(\HH^n) = \left[ \frac{(n-1)^2}{4},\, \infty \right)
\]
and has no embedded eigenvalues. Since $\Hc(r) \to 1$ as $r \to \infty$, volume comparison together with Brooks' theorem \cite{brooks} imply that
\begin{equation}\label{eq_brooks}
\inf \sigma_\ess(M) \le  \frac{1}{4} \left[\liminf_{r \to \infty} \frac{\log |B_r|}{r}\right]^2 \le \frac{(n-1)^2}{4}.
\end{equation}

However, $\sigma_\ess(M)$ may be disconnected, i.e. gaps may occur in $\sigma_\ess(M)$. A striking example was found by Lott \cite{lott}: for each $\eps>0$, he constructed a complete manifold $M_\eps$ with sectional curvatures in $[-1-\eps,-1+\eps]$, finite volume and an infinite number of gaps in  $\sigma_\ess(M)$ diverging to $\infty$. This is particularly interesting if compared to the case of finite volume hyperbolic manifolds, whose essential spectrum always coincides with 
$\sigma(\HH^n)$. 

If $\vol(M) = \infty$, things are substantially different. For instance, Donnelly \cite{donnelly2} showed that a Cartan-Hadamard manifold $M$ (i.e. a complete, simply connected manifold with $\Sec \le 0$) whose sectional curvature tends to $-1$ at infinity satisfies 
$$
\sigma_{\ess}(M) = \sigma(\HH^n).
$$ 
Moreover, by a result due to McKean \cite{mckean}, $\sigma(M) = \sigma(\HH^n)$ if we further require $\Sec \le -1$ on the entire $M$. While Cartan-Hadamard $n$-manifolds are diffeomorphic to $\R^n$, \emph{conformally compact asymptotically hyperbolic manifolds} (CC-AH) provide a relevant class of spaces satisfying \eqref{eq_assu_ricc} that allow for a nontrivial topology. Recall that $(M,g)$ is said to be conformally compact if $M$ is diffeomorphic to the interior of a compact manifold $\overline{M}$ and there exists a defining function $\rho \in C^\infty(\overline{M})$ for $\partial \overline{M}$ such that  $\bar g = \rho^2 g$ is a smooth metric on $\overline{M}$. If $|\di \rho|_{\bar g} \equiv 1$ on $\partial \overline{M}$ we say that $(M,g)$ is CC-AH, since by explicit computation
\[
\Sec(\pi_x) = - 1 + O(\rho(x)) \qquad \forall \pi_x \subset T_xM. 
\]
In particular, this is automatically the case if $M$ is conformally compact and $\Ric = -(n-1)g$, i.e. $M$ is (Poincar\'e)-Einstein. Mazzeo \cite{mazzeo} proved that the essential spectrum of a CC-AH manifold satisfies $\sigma_\ess(M) = \sigma(\HH^n)$, with no embedded eigenvalues. If moreover $(M,g)$ is Poincar\'e-Einstein and the conformal infinity $(\partial \overline{M}, \bar g_{\partial \overline{M}})$ has non-negative Yamabe invariant, a theorem due to J. Lee \cite{lee} (see also \cite{x_wang} for an alternative proof) guarantees that $\lambda(M) \ge (n-1)^2/4$, whence
\[
\sigma(M) = \sigma(\HH^n). 
\]
On more general manifolds satisfying \eqref{eq_assu_ricc}, very little is known. Charalambous \& Lu ~\cite{CharalambousLu} conjectured that any complete manifold with $\Ric \ge -(n-1)$ and infinite volume should have an essential spectrum of the form \( [a, \infty) \) for some \( a \geq 0 \). However, Schoen \& Tran~\cite{TranSchoen} constructed examples of complete manifolds with bounded geometry (bounded curvature tensor, positive injectivity radius) whose essential spectrum has an arbitrary \emph{finite} number of gaps. Nevertheless, their construction gives no clear indication of the link between the curvature lower bound and the location of the spectral gaps. For instance, as far as we know, the following problems are open:

\begin{question}
	Prove or disprove: if a complete manifold $M^n$ satisfies $\Ric \ge-(n-1)$ and $\vol(M) = \infty$, then there exists $c = c(n,M)$ such that $[c,\infty) \subset \sigma(M)$.
\end{question}

\begin{question}
	Prove or disprove: if a complete, non-compact manifold $M^n$ satisfies $\Ric \ge-(n-1)$ and $\lambda(M)>0$, then there exists $c = c(n,\lambda)$ such that $[c,\infty) \subset \sigma(M)$.
\end{question}

This paper aims to answer Question 2 in the case of maximal $\lambda(M)$. 
To state the result in full generality, recall that an end $E \subset M$ is a connected component with non-compact closure of the complement of a compact set. The bottom spectrum $\lambda(E)$ is, by definition, the bottom of the spectrum of the Friedrichs extension of $(-\Delta, C^\infty_c(E))$, and it is variationally characterized as 
\[
\lambda(E) = \inf \left\{ \int_E |\nabla \phi|^2 \ : \ \phi \in C^\infty_c(E), \ \int_E \phi^2 =1 \right\}.
\]
Clearly, $\lambda(M) \le \lambda(E)$. By localizing \cite[Theorem 3.1]{docarmo_zhou} to a single end and using Bishop-Gromov's theorem, the bound \eqref{eq_assu_ricc} for some $\Hc(r)$ converging to $1$ as $r \to \infty$ implies 
\[
\lambda(E) \le \frac{(n-1)^2}{4} \qquad \text{for each end $E$ with } \, \vol(E) = \infty.
\]

Here is our main result: 

\begin{theorem} \label{mainthm} Let $(M^n,g)$ be a complete manifold such that 
	\begin{equation}\label{eq_Riccibound_intro}
	\Ric \geq -(n-1)\Hc(r),
	\end{equation}
	where $\Hc$ is a continuous, non-increasing function satisfying
	\begin{equation}\label{eq_inte_H}
	\int^\infty [\Hc(r)-1] \di r < \infty.
	\end{equation}
	If there exists an end $E$ such that
	\begin{equation}\label{eq_existence_E}
	\vol(E) = \infty, \qquad \lambda(E) = \frac{(n-1)^2}{4},
	\end{equation}
	then $\sigma_\ess(M) \supset \left[ \frac{(n-1)^2}{4}, \infty \right)$. In particular, if \eqref{eq_existence_E} is strengthened to 
	\begin{equation}\label{eq_lambda1M} 
	\lambda(M) = \frac{(n-1)^2}{4},
	\end{equation}
	then $\sigma(M) = \left[ \frac{(n-1)^2}{4}, \infty \right)$.
\end{theorem}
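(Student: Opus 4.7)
My strategy is to verify Weyl's criterion: for each $\lambda \ge (n-1)^2/4$, I will construct a sequence $\{\phi_k\} \subset C^\infty_c(E)$ with $\|\phi_k\|_{L^2} = 1$, supports escaping to infinity, and $\|(\Delta + \lambda)\phi_k\|_{L^2} \to 0$; since the supports diverge, weak convergence to $0$ is automatic and thus $\lambda \in \sigma_\ess(M)$. The ``in particular'' clause is immediate from $\sigma(M) \subset [\lambda(M),\infty) = [(n-1)^2/4,\infty)$, once one observes that $\lambda(M) = (n-1)^2/4$ forces the existence of an end $E$ satisfying \eqref{eq_existence_E}: by domain monotonicity $\lambda(M) \le \lambda(E')$ for every end $E'$, and a finite-volume end has $\lambda(E') = 0$, so the relevant end must have infinite volume and $\lambda(E) = (n-1)^2/4$ exactly.

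The first ingredient is a positive solution $u \in C^\infty(E)$ of $\Delta u + \tfrac{(n-1)^2}{4} u = 0$, produced by a Fischer--Colbrie/Agmon-type argument from $\lambda(E) = (n-1)^2/4$: I exhaust $E$ by smooth relatively compact subdomains, take positive first Dirichlet eigenfunctions normalized at a base point, and extract a locally uniform subsequential limit $u > 0$ on $E$ by Harnack. The second ingredient is a sharp \emph{averaged} decay estimate for $u$ in the radial direction, of Agmon type, to the effect that
$$\nabla u \cdot \nabla r + \tfrac{n-1}{2}\, u \;=\; o(u)$$
in $L^2$ along a suitable divergent sequence of annuli. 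This is where the hypotheses become crucial: Laplacian comparison against the model whose warping satisfies $g''_\Hc = \Hc\, g_\Hc$ together with $\Hc - 1 \in L^1(\infty)$ gives $\Delta r \le (n-1) + \eta(r)$ with $\eta \in L^1(\infty)$, and the Green-kernel analysis of $-\Delta - \tfrac{(n-1)^2}{4} + \eps$ as $\eps \searrow 0$ pins down the asymptotics of $u$, analogous to $\sinh(r)^{-(n-1)/2}$ on $\HH^n$.

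Given $u$, for $\mu \ge 0$ and $\lambda_\mu = \tfrac{(n-1)^2}{4} + \mu^2$ I form the oscillating test functions
$$\tilde\phi_k(x) \;=\; \chi_k(r(x))\, u(x)\, e^{i\mu r(x)},$$
where $\chi_k$ is a smoothed indicator of $[R_k, R_k + T_k]$ with $R_k, T_k \to \infty$. A direct computation using $\Delta u + \tfrac{(n-1)^2}{4} u = 0$ and $|\nabla r| = 1$ yields
$$(\Delta + \lambda_\mu)\tilde\phi_k \;=\; i\mu\, \chi_k\, e^{i\mu r}\Big[\,2\big(\nabla u \cdot \nabla r + \tfrac{n-1}{2} u\big) \;+\; u\big(\Delta r - (n-1)\big)\Big] \;+\; \bigl(\text{cutoff terms}\bigr),$$
where the ``cutoff terms'' collect the contributions containing $\chi_k'$ or $\chi_k''$. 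Each of the three error contributions is handled by one of the hypotheses: the cutoff terms are negligible in $L^2$ relative to $\|\chi_k u\|_{L^2}$ as $T_k \to \infty$; the comparison defect $\Delta r - (n-1)$ has integrable tails by $\Hc - 1 \in L^1(\infty)$; and the cross term is controlled by the averaged decay from step two along well-chosen radii. Setting $\phi_k = \tilde\phi_k / \|\tilde\phi_k\|_{L^2}$, and applying Calabi's trick to smooth $r$ across its cut locus, produces the required Weyl sequence.

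The main obstacle is the sharp control of $u$ in step two. Without precise pointwise asymptotics one must extract, by a Fubini/pigeonhole argument on the radial variable, a sequence of ``good'' annuli $\{R_k < r < R_k + T_k\}$ on which the cross term $\nabla u \cdot \nabla r + \tfrac{n-1}{2} u$ is negligible compared to $u$ in $L^2$. The Green-kernel methods flagged in the paper's keywords are the natural tool here: comparing $u$ to a barrier built from the Green kernel of $-\Delta - \tfrac{(n-1)^2}{4} + \eps$ at its natural spectral threshold should yield exactly the Agmon-type averaged decay needed to close the estimate.
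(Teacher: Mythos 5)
Your proposal takes a genuinely different route from the paper: you work with the distance function $r$ and a positive generalized ground state $u$ solving $\Delta u + \tfrac{(n-1)^2}{4}u = 0$, and build Weyl sequences of the form $\chi_k(r)\,u\,e^{i\mu r}$. The paper instead reparametrizes the minimal Green kernel $\gr$ of $-\Delta$ into a ``fake distance'' $b$, uses the pointwise gradient estimate $|\nabla b| \le 1$ (from \cite{mrs}) together with the exact identity $\Delta b = \tfrac{v'_\Hc(b)}{v_\Hc(b)}|\nabla b|^2$, and obtains the needed integral control on $1-|\nabla b|^2$ by testing the Poincar\'e inequality against $\eta(b)e^{(n-1)b/2}\gr$. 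The test functions are then $\eta(b)e^{i\beta b}/\sqrt{v_\Hc(b)}\,\mathbb{1}_E$; the decay $v_\Hc^{-1/2}$ plays the role your $u$ would. This has the decisive advantage that the decay rate is prescribed rather than inferred.

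There are, however, three genuine gaps in what you propose.

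\textbf{(1)} The central step -- the ``averaged decay'' estimate $\nabla u\cdot\nabla r + \tfrac{n-1}{2}u = o(u)$ in $L^2$ along good annuli -- is not established, and it is exactly the hard point. It amounts to saying that the radial logarithmic derivative of $u$ converges in an $L^2$-averaged sense to $-(n-1)/2$; this is \emph{not} a consequence of $\lambda(E) = \tfrac{(n-1)^2}{4}$ together with a Cheng--Yau bound, and the vague appeal to ``Green-kernel methods'' does not close it. The paper's fake-distance device is designed precisely to sidestep this: the pointwise bound $|\nabla b|\le 1$ is a theorem, and the Poincar\'e hypothesis is what makes the defect $1-|\nabla b|^2$ integrable against the weight $e^{-(n-1)b}$ (Lemma~\ref{mainlemma}); nothing analogous is available for $\partial_r\log u$ without further work.

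\textbf{(2)} Your derivation of the ``in particular'' clause is wrong. You assert that a finite-volume end has $\lambda(E') = 0$; this is false (a hyperbolic cusp has finite volume and $\lambda = \tfrac{(n-1)^2}{4}$, and Li--Wang's dichotomy explicitly allows $\lambda(E)>0$ with $\vol(E)<\infty$, with exponentially decaying volume). The correct argument, used in the paper, invokes Li--Wang \cite{liwang_pos_2}: under the present Ricci assumption, $\lambda(M)=\tfrac{(n-1)^2}{4}$ forces $M$ to be non-parabolic with a single end of infinite volume.

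\textbf{(3)} The cut-locus issue is more serious than ``Calabi's trick.'' Calabi's trick gives barrier/support information, but you need $\Delta$ applied to your test function to lie in $L^2$ with controlled norm, and the distributional Laplacian of $r$ has a singular negative part on $\cut(o)$; the paper explicitly notes in the Introduction that this is why the distance-function approach is abandoned under assumption \eqref{eq_Riccibound_intro}: the error terms ``seem much harder to control'' than in the asymptotically Ricci-flat case. The fake distance $b$ is smooth on $M\setminus\{o\}$, so this difficulty disappears.

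In short: the architecture (Weyl sequences of the form ``decaying amplitude $\times$ radial oscillation'') is sound and parallel to the paper's, but the amplitude you choose requires an unproved sharp decay estimate, the reduction of \eqref{eq_lambda1M} to \eqref{eq_existence_E} is incorrect as stated, and the regularity issue at the cut locus is unresolved.
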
 

\begin{remark}
It is known that \eqref{eq_lambda1M} implies $\vol(M) = \infty$, thus \eqref{eq_existence_E} holds for some end $E$. 
\end{remark}

\begin{remark}
	The above assumptions on $\Hc$ guarantee that $1 \le \Hc(r) \downarrow 1$ as $r \to \infty$. Hence, under \eqref{eq_existence_E} (respectively, \eqref{eq_lambda1M}), $E$ (resp. $M$) has the maximal bottom spectrum compatible with the Ricci curvature condition.
\end{remark}

\begin{remark}\label{rem_sullivan}
The class of manifolds enjoying  \eqref{eq_Riccibound_intro}, \eqref{eq_inte_H} and \eqref{eq_lambda1M} allow for various examples of interest. As stated above, one of them is Poincar\'e-Einstein manifolds whose conformal infinity has non-negative Yamabe invariant. Another example is given by hyperbolic manifolds $M = \HH^n \backslash \Gamma$, where $\Gamma$ is a torsion-free, discrete group of isometries of $\HH^n$ which is geometrically finite (i.e. the fundamental domain for $\Gamma$ has finitely many sides). In this setting, Sullivan \cite{sullivan} showed that  \eqref{eq_lambda1M} holds if and only if the Hausdorff dimension $\delta(\Gamma)$ of the limit set of $\Gamma$  satisfies
	\[
	\delta(\Gamma) \le \frac{n-1}{2}.
	\]
	Moreover, the request of geometric finiteness can be dropped in dimension $n=2$.\par
	Nevertheless, the assumptions in Theorem \ref{mainthm} force some topological and geometrical restrictions on $M$, as follows from works by Li \& Wang \cite{liwang_pos_1, liwang_pos_2} and Wang \cite{wang_X}: indeed, 
	\begin{itemize}
			\item by combining \cite[Theorem 1.4]{liwang_pos_1} with  comparison theory (see Proposition \ref{prop_pinching} below), if \eqref{eq_Riccibound_intro}, \eqref{eq_inte_H} and \eqref{eq_existence_E} hold the volume growth of balls $B_R$ with a fixed center must enjoy the two-sided bound
	\[
	C^{-1} \exp\big\{(n-1)R\big\} \le \vol(B_R) \le C \exp\big\{(n-1)R\big\} \qquad \forall \, R \ge 1,
	\]
	for some constant $C>0$ depending on $M$; 
	\item by \cite{liwang_pos_2}, a complete manifold with $\Ric \ge -(n-1)$ for which \eqref{eq_lambda1M} holds must have only one end unless $M = \R \times N$ with the warped product metric $\di t^2 + e^{2t} g_N$, for some compact manifold $(N,g_N)$ with non-negative Ricci curvature.
	\item by \cite{wang_X}, a complete manifold with $\Ric \ge -(n-1)$ which is the Riemannian covering of a compact manifold must be $\HH^n$ whenever \eqref{eq_lambda1M} holds.
	\end{itemize}
Apart from these results, we are aware of no further rigidity for manifolds satisfying the assumptions in Theorem \ref{mainthm}.  
\end{remark}

The integrability condition \eqref{eq_inte_H} is essential to obtain our estimates. Interestingly, it relates to a curvature threshold recently highlighted for the existence of embedded eigenvalues: consider a radially symmetric model given by $\R^n$ with metric, in polar coordinates centered at $0$,
\[
g_{\Hc} = \di r^2 + h(r)^2 g_{\mathbb{S}}, 
\]
where $g_{\mathbb{S}}$ is the round metric on $\mathbb{S}^{n-1}$ and  
\[
\qquad h \in C^2(\R^+_0), \qquad h>0 \ \ \text{on } \, \R^+, \qquad  h(0) = 0, \qquad h'(0)=1.
\]
Setting $\Hc(r) \doteq h''(r)/h(r)$, we call the model $M_\Hc$. The radial sectional curvature (the sectional curvature of planes containing $\nabla r$) of $M_\Hc$ is given by $-\Hc(r)$, while the Ricci curvature has eigenvalues 
\begin{equation}\label{eq_ricci_eigen}
-(n-1)\Hc(r) \qquad \text{and} \qquad -(n-1)\Hc(r) + (n-2)\left[ \Hc(r) + \frac{1 - h'(r)^2}{h(r)^2} \right]
\end{equation}
of multiplicity, respectively, $1$ and $(n-1)$. It follows that $\Hc(r) \to 1$ as $r \to \infty$ implies 
\[
\sigma_\ess(M_{\Hc}) = \sigma(\HH^n) 
\]
and that, if $h(r) \to \infty$ as $r \to \infty$, $M_\Hc$ satisfies $\Ric \ge -(n-1)\hat\Hc(r)$ for some $\hat \Hc(r) \to 1$ at infinity\footnote{Both facts can be seen by noting that $\phi(r) = (\Delta r)/(n-1) = h'(r)/h(r)$ satisfies $\phi' + \phi^2 = \Hc$. Riccati comparison (see \cite[Proposition 2.12]{bmr}) and the behaviour of the explicit solutions to $\hat{\phi}' + \hat{\phi}^2 = 1$ imply that either $\phi(r) \to 1$ or $\phi(r) \to -1$ as $r \to \infty$. The statement $\sigma_\ess(M_\Hc) = \sigma(\HH^n)$ follows by applying \cite[Theorem 1.2]{kumura1}.}. However, $\sigma_\ess(M_\Hc)$ may or may not contain embedded eigenvalues, depending on the rate of decay of $|\Hc(r) -1|$. Recent works \cite{jitoliu,jitoliu_2,kumura1} made breakthrough progress on this problem. For any given countable, possibly dense subset $A \subset \sigma(\HH^n)$ and any given positive function $C(r)$ diverging as $r \to \infty$, Jitomirskaya and Liu in \cite{jitoliu} were able to produce the first example of a model $M_\Hc$ satisfying
\begin{equation}\label{eq_radial_asicurv}
\Hc(r) \to 1, \qquad  r \, |\Hc(r) - 1| \le C(r)
\end{equation}
whose essential spectrum has an embedded eigenvalue at each $\lambda \in A$. If $A$ is finite, they constructed an example with $C(r)$ uniformly bounded. On the other hand, Kumura \cite{kumura1} proved that if $C(r)$ vanishes at infinity and $h(r) \to \infty$ as $r \to \infty$ (the latter guarantees \cite[(1.3)]{kumura_ess})
then no embedded eigenvalue exist. Note that the function $\Hc(r) = 1 + (1+r)^{-1}$ barely fails to satisfy \eqref{eq_inte_H}, and we may therefore ask the following 

\begin{question}\label{Q_main}
Let $(M^n,g)$ be a complete manifold satisfying 
\eqref{eq_Riccibound_intro} and \eqref{eq_inte_H}. If
\[
	\lambda(M) = \frac{(n-1)^2}{4},
\]
can the spectrum $\sigma(M) = \left[ \frac{(n-1)^2}{4}, \infty \right)$ contain embedded eigenvalues?
\end{question}

Taking into account the known criteria for the absence of embedded eigenvalues in \cite{donnelly,donnelly_garofalo,kumura1}, to address the question it might be necessary to considerably improve our current understanding of the behaviour of the fake distance $b$ constructed below. This would be of independent interest.

\begin{example}
	Following Remark \ref{rem_sullivan}, let $M = \HH^n/\Gamma$ be a hyperbolic manifold  with $\Gamma$ torsion free, discrete and geometrically finite. As observed by Mazzeo \cite[p.26]{mazzeo}, if $M$ has infinite volume (which is the case if $\delta(\Gamma) < n-1$) then a result by Lax and Phillips \cite{lax_phi} guarantees the absence of  embedded eigenvalues in $\sigma_\ess(M)$. In particular, if $\delta(\Gamma) \le \frac{n-1}{2}$, Question \ref{Q_main} has a negative answer for such manifolds. 
\end{example}

\vspace{0.3cm}

We briefly discuss our strategy for Theorem \ref{mainthm}. By a classical criterion due to Weyl, to prove that $\lambda \in \sigma(M)$ one has to construct a family of functions $\{u_j\} \subset D(\Delta)$ such that 
\begin{equation}\label{eq_weyl}
\|\Delta u_j + \lambda u_j\|_2 = o\big( \|u_j\|_2 \big) \qquad \text{ as } \, j \to \infty. 
\end{equation}
Typically, each $u_j$ is produced as a function $\eta_j(b)$, where $b : M \to \R$ mimics the distance $r$ to $o$ or to a compact subset of $M$, and $\eta_j$ relates to the model $M_\Hc$ to which $M$ compares. To make the construction effective, one therefore needs estimates on $|\nabla b|$ and $\Delta b$, singled out in  \cite{elworthy_wang,donnelly}.

The choice $b=r$ is not possible unless $o$ has empty cut-locus, because the distribution $\Delta r$ is typically not  $L^2_\loc$ in neighbourhoods of $\cut(o)$. However, if one is able to uniformly bound $\|u_j\|_\infty$ from above and below (i.e. the candidate function $\eta_j$, which is usually the case), Charalambous-Lu's criterion  \cite{CharalambousLu} allows to replace  \eqref{eq_weyl} by the request that
\[
\|\Delta u_j + \lambda u_j\|_1 = o\big( \|u_j\|_2 \big) \qquad \text{ as } \, j \to \infty. 
\]
Although, in general, $r$ still cannot be used to produce $u_j$ (as $\Delta r \not \in L^1_\loc$ whenever $\mathscr{H}^{n-1}(\cut(o)) \neq 0$, see \cite{mante_masce_ural}), since $|\Delta r|$ is a locally finite measure one can approximate $r$ by convolution keeping track of the error in the $L^1$-norm. In the setting of Theorem \ref{teo_luzhou}  the procedure leads to effective estimates and allows to conclude, but under assumption \eqref{eq_Riccibound_intro} it seems much harder to control the error terms.

For this reason, we here choose a different function $b$, obtained by reparametrizing the minimal positive Green kernel $\gr$ of $-\Delta$ on $M$ with pole at $o$. Such a choice is not new, as it was extensively used by Colding and Minicozzi \cite{cm1,cm2} to study the geometry of manifolds with $\Ric \ge 0$ and allowed  Donnelly \cite{donnelly} to prove Theorem \ref{teo_luzhou} for manifolds with $\Ric \ge 0$ and maximal volume growth. However, to our knowledge it was never used before on manifolds whose Ricci curvature satisfies \eqref{eq_Riccibound_intro}. The existence of $\gr$ follows from Li \& Wang's theory in  \cite{liwang_pos_1}, who proved that an end $E$ satisfying \eqref{eq_existence_E} is necessarily non-parabolic. Assuming \eqref{eq_assu_ricc}, one can therefore define a \textit{fake distance} $b : M \to [0,\infty)$ as follows: letting $\sgrh(r)$ be the minimal, positive Green kernel of $M_\Hc$ with pole at the origin, $b(x)$ is implicitly defined by
$$
\mathcal{G}(x)= \sgrh\bigl(b(x)\bigr).
$$
Note that $b=r$ when $M = M_\Hc$ and $o$ is the origin of $M_\Hc$. Explicit computation gives

\[
\Delta b = \frac{v_\Hc'(b)}{v_\Hc(b)}|\nabla b|^2,
\]
where \( v_\Hc \) denotes the volume of geodesic spheres in \( M_\Hc \). Estimating $\Delta b$ therefore leads to control $|\nabla b|$ in some integral form, and to this aim the following gradient estimate in \cite[Theorem 2.19]{mrs} turns out to be crucial:
\[
|\nabla b| \le 1 \qquad \text{on } \, M.
\] 
More precisely, one needs to control integrals of the type
\begin{equation}\label{eq_inte_coresti}
\int_{\{s \le b \le t\} \cap E} \big(1-|\nabla b|^2\big) 
\end{equation}
for $0< s < t$, which is where assumption \eqref{eq_existence_E} comes into play. Although $b$ may not be an exhaustion function, the issue is treated via some results in potential theory, see Proposition \ref{prop_diver} and Theorem \ref{thm_test}. 

\begin{remark}
	It is interesting to compare the proof of Theorem \ref{mainthm} to that of \cite[Theorem 1]{bmmv}, where properly immersed minimal submanifolds $M^n \to \HH^{n+p}$ whose density grows subexponentially at infinity are shown to satisfy $\sigma(M) = \sigma(\HH^n)$. In place of $b$, the authors use the extrinsic distance function from a fixed point in $\HH^{n+p}$, and \eqref{eq_inte_coresti} is estimated by carefully combining the monotonicity formula (in a strengthened form) with the upper bound on the density function. In a certain sense, condition \eqref{eq_existence_E} plays the role of the low density growth condition in \cite{bmmv}, and some integral identities in \eqref{eq_int_nabla_b} that of the monotonicity formula.
\end{remark}

The structure of the paper is as follows. In Section \ref{sec_prelim}, we review some background material on Green kernels and the Royden algebra, that we use to address the possible non-properness of $b$. We then recall the construction of \( b \), along with key estimates and structural identities, and eventually, in Theorem \ref{thm_test}, we show that our candidates for $u_j$ lie in $D(\Delta)$. Section \ref{sec_proof} is devoted to the proof of Theorem \ref{mainthm}.

\section{Preliminaries}\label{sec_prelim}
\subsection{Green Kernel and Royden Algebra of $M$}
We refer the reader to \cite{prs,grigoryan}. A manifold $(M^n,g)$ is called non-parabolic if it admits a positive Green kernel $\gr$ for $-\Delta$. Considering one of the points $o$ as fixed, $\gr$ satisfies
\[
\mathcal{G}(x) > 0 \quad \text{for all } x\in M\setminus\{o\}, \qquad \Delta \mathcal{G} = -\delta_o \quad \text{in the sense of distributions},
\]
where the latter means 
\[
\int_M \langle \nabla \gr(o,y), \nabla \phi(y)\rangle \di y = \phi(o), \qquad \forall \, \phi \in \lip_c(M).
\]
Let $\{\Omega_j\}$ be an exhaustion of \(M\) by relatively compact, smooth open sets satisfying
\[
o \in \Omega_j \Subset \Omega_{j+1} \quad \text{and} \quad \bigcup_{j=1}^\infty \Omega_j = M.
\]
If $M$ is non-parabolic, a minimal such $\gr$ can be constructed as the monotone limit of kernels $\gr_j$ defined on $\Omega_j$ with Dirichlet boundary conditions. Indeed, the non-parabolicity of $M$ is equivalent to the fact that $\gr_j$ converge locally uniformly (hence, smoothly) on $M \backslash \{o\}$. By comparison, the limit $\gr$ does not depend on the chosen exhaustion. 
By definition, an end $E \subset M$ with respect to some compact set $K$ (say, with smooth boundary) is said to be non-parabolic if the double of $E$ is a non-parabolic manifold without boundary, see \cite[Section 7]{prs}. It turns out that $M$ is non-parabolic if and only if, having fixed one  such $K$, at least one of the ends with respect to $K$ is non-parabolic.    Li and Wang \cite{liwang_pos_1,liwang_pos_2} studied in depth the geometry of ends satisfying $\lambda(E) > 0$. In particular, the following dichotomy holds: either 
\begin{itemize}
	\item[-] $\vol(E)< \infty$ and exponentially decays, i.e. $\vol(E \setminus B_R) \le C \exp \{ -2 R \sqrt{\lambda(E)}\}$; 
	\item[-] $\vol(E) = \infty$, $E$ is non-parabolic and its volume grows exponentially: $\vol(E \cap B_R) \ge C \exp \{2 R \sqrt{\lambda(E)}\}$,
\end{itemize}
for some constant $C>0$ and for $R \ge 1$. In particular, in the second case the whole $M$ is non-parabolic.

\begin{remark}\label{rem_useful}
The kernel $\gr(x)$ diverges as $x \to o$, but may not tend to zero as $x \to \infty$. However, by fixing any relatively compact open set $B$ containing $o$ and setting $$a = \max_{\partial B} \gr,$$ the maximum principle implies that $\{\gr_j > a\} \subset B$ for each $j$, hence $\{\gr > a\} \subset B$.  Consequently, setting 
\[
\ell \doteq \limsup_{x\to\infty}\mathcal{G}(x),
\]
we have \(\ell < \infty\) and, for any \(a>\ell\), the superlevel set \(\{\mathcal{G} > a\}\) is compact.
\end{remark}

For each $0 \le s<t$, define
\[
l(t) \doteq \{\gr = t\}, \qquad L(s,t) \doteq \{s \le \gr \le t\},
\]
and likewise with a subscript $j$ when referring to $\gr_j$. For each $t$ such that $l_j(t)$ is smooth (a.e. $t$, by Sard's theorem), by integrating $\Delta \gr_j = -\delta_o$ against
\[
\phi_\eps = \min\left\{ \frac{t + \eps - \gr_j }{\eps},1\right\}_{+},
\]
letting $\eps \to 0$ and using the coarea formula one gets
\begin{equation}\label{eq_level_Gj}
\int_{l_j(t)} |\nabla\gr_j| = 1 \qquad \text{for a.e. $t \in \R^+$}. 
\end{equation}
Therefore, again by the coarea formula,
\begin{equation}\label{eq_level_Gj_2}
\int_{L_j(s,t)} |\nabla\gr_j|^2 = t-s \qquad \text{for a.e. every $0< s \le t$}.  
\end{equation}
The identities \eqref{eq_level_Gj} and \eqref{eq_level_Gj_2} also hold for $\gr$, with the same proof, provided that $\overline{L(s,t)}$ is compact. We extend these identities to the case of non-compact level sets by recalling a standard construction in potential theory. For more details, see for instance \cite{ns,changsario} and the thesis \cite{valto}.

By definition, the \emph{Royden algebra} of $M$, $\roy(M)$, is the set of all functions $w \in C(M) \cap W^{1,2}_\loc(M)$ satisfying
$$
D_{M}(w) \doteq \int_M |\nabla w|^2 < \infty,
$$
and whose representation in (any) chart is absolutely continuous in each variable separately (also called \emph{Tonelli functions}). Two topologies are naturally defined on $\roy(M)$:
\begin{enumerate}
	\item[1.] \textit{The (uniform-Dirichlet) topology $\tau_{\UD}$}  is induced by the norm 
	\[
	\|w\|_{\UD} = \|w\|_{L^\infty(M)} + \|\nabla w\|_{L^2(M)},
	\] 
	which makes $(\roy(M), \|\cdot\|_{\UD})$ into a Banach Algebra.
	\item[2.] \textit{The (bounded-Dirichlet) topology $\tau_{\BD}$} is defined via an (uncountable) basis of neighborhoods of \(0\) as follows: having fixed an exhaustion $\{\Omega_j\}$ of $M$,
		\[
		V(0, \{c_j\},\eps) = \Big\{ w \in \roy(M) \,:\, D_{M}(w) < \eps \;\text{and, for each $j$,}\; |w| < c_j \;\text{on}\; \Omega_j\Big\},
	\]
		for every \(\varepsilon>0\) and any divergent sequence \(\{c_j\}\) of positive numbers. In particular, a sequence $w_j \to w$ in $\tau_{\BD}$ if and only if
	\begin{equation}\label{propwj}
		\begin{aligned}
			&\text{(a)}\, \|w_j\|_{L^\infty(M)} \leq C < \infty, \quad \text{for some uniform constant } C; \\
			&\text{(b)}\, w_j \to w \;\text{ locally uniformly in } M; \\
			&\text{(c)}\, D_{M}(w_j-w) \to 0.
		\end{aligned}
	\end{equation}
\end{enumerate}
The algebra $\roy(M)$ contains two distinguished ideals: $\roy_c(M)$, the subspace of functions with compact support, and $\roy_0(M)$, defined as the \(\tau_{\BD}\)-sequential closure of $\roy_c(M)$. Moreover, the subalgebras $C^\infty(M) \cap \roy(M)$ and $C^\infty_c(M)$ are dense in $(\roy(M), \tau_{\UD})$ and $(\roy_0(M), \tau_{\BD})$, respectively. \par
Note that each point $x \in M$ defines a continuous functional $I_x$ on $(\roy(M),\tau_{\UD})$ by setting $I_x(f) = f(x)$. The functional is multiplicative, i.e. it satisfies $I_x(fg) = I_x(f)I_x(g)$, and it has unit norm.
\begin{definition}
	The \emph{Royden compactification} $M^*$ of $M$ is the set of unit norm multiplicative linear functionals (usually called \textit{characters}) on $(\roy(M), \tau_{\UD})$  with the weak* topology. 
	
\end{definition}
By Banach-Alaoglu's theorem, $M^*$ is compact, and moreover there is a continuous embedding $x \in M \mapsto I_x \in M^*$ into an open, dense subset of $M^*$. Each function $f \in \roy(M)$ can be continuously extended on $M^*$ by setting $f(\xi) \doteq \xi(f)$ for each character $\xi \in M^*$. The boundary $M^*\backslash M$ is characterized as the set 
\begin{equation}\label{defboundary}
	M^*\backslash M = \Big\{ \xi \in M^* \ \text{ such that } \, f(\xi)=0 \, \text{ for each } \, f \in \roy_c(M) \Big\}, 
\end{equation}

The behaviour at boundary points of functions in $\roy_0(M)$ leads to define the harmonic boundary $\partial_HM$ and the irregular boundary $\partial_I M$: 
$$
\begin{array}{lcl}
	\partial_H M & = & \disp \Big\{ \xi \in M^*\backslash M \ : \ \ f(\xi) = 0 \ \text{ for each } f \in \roy_0(M) \Big\}, \\[0.2cm]
	\partial_IM & = & \disp \Big\{ \xi \in M^*\backslash M \ : \ \ f(\xi) > 0 \ \text{ for some } f \in \roy_0(M) \Big\}.
\end{array}
$$ 
Moreover,  
\begin{itemize}
	\item[$(i)$] if $\Omega \subset M^*$ satisfies $\overline \Omega \cap \partial_H M = \emptyset$, then there exists $\eta \in \roy_0(M)$ such that $\eta \equiv 1$ on $\Omega$. 
	\item[$(ii)$] $\roy_0(M) \equiv \roy(M)$ if and only if $\partial_H M = \emptyset$, if and only if $M$ is parabolic.
	\item[$(iii)$] The ideal $\roy_0(M)$ coincides with the set 
	$\big\{ f \in \roy(M) : f(\partial_H M) =0 \big\}$.
\end{itemize}
Assume now that $M$ is non-parabolic, and let $\gr = \lim_j \gr_j$ be the minimal positive Green kernel of $\Delta$ centered at $o$. Direct computation shows that $\min\{\gr,t\} \in \roy_0(M)$. 

Since, by construction, 
$\overline{L(s,t)} \subset M^*$ has no intersection with the harmonic boundary $\partial_H M$, by (i) there exists a function $w \in \roy_0(M)$ which is identically $1$ on $L(s,t)$. By density, there exists a sequence $\{w_j\} \subseteq C^\infty_c(M)$ such that $w_j \to w$ in the $\BD$-topology.

We are ready to state the main result of this section, which improves on \cite[Lemma 5.1]{liwang3}.

\begin{proposition}\label{prop_diver}
	Let $M$ be a non-parabolic manifold, fix $o \in M$ and let $\gr$ be the minimal positive Green kernel with pole at $o$. Then: 
	\begin{itemize}
		\item[$(i)$] for almost every $0 < s < t < \infty$, 
		\begin{equation}\label{intfinitofront}
			\int_{l(t)}|\nabla \gr| = 1, \qquad \int_{L(s,t)} |\nabla \gr|^2 = t-s;
		\end{equation}
		\item[$(ii)$] for almost every $0<s<t< \infty$ and for every $u \in L^\infty(M) \cap \lip_\loc\big(\overline{L(s,t)}\big)$ satisfying
		\begin{equation}\label{intelocale}
			\int_{L(s,t)} |\nabla u|^2 < \infty,
		\end{equation}
		it holds
		\begin{equation}\label{teodiver}
			\int_{L(s,t)} \diver(u \nabla \gr) = \int_{l(t)} u |\nabla \gr| - \int_{l(s)} u|\nabla \gr|.
		\end{equation}
	\end{itemize}
\end{proposition}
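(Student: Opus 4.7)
The plan is to test the distributional identity $\int_M \langle \nabla \gr, \nabla \phi\rangle = \phi(o)$, valid for $\phi \in \lip_c(M)$, against carefully chosen Lipschitz cutoffs built from both $\gr$ and the Royden algebra, so as to circumvent the possible non-compactness of $\overline{L(s,t)}$ when $t \le \ell$. The key \emph{Royden cutoff} is produced as follows: since $\min\{\gr,t\} \in \roy_0(M)$ vanishes on $\partial_H M$ by property $(iii)$ while being $\ge s > 0$ on $L(s,t)$, the closure $\overline{L(s,t)}^* \subset M^*$ is disjoint from $\partial_H M$. Property $(i)$ then produces $w \in \roy_0(M)$, $0 \le w \le 1$, with $w \equiv 1$ on a neighbourhood of $\overline{L(s,t)}^*$; $\tau_{\BD}$-density of $C^\infty_c(M)$ together with a Stampacchia truncation yields $w_j \in \lip_c(M)$, $0 \le w_j \le 1$, with $w_j \to w$ locally uniformly and $\|\nabla(w_j - w)\|_{L^2(M)} \to 0$. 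In particular $w_j \to 1$ locally uniformly on $L(s,t)$ and $\|\nabla w_j\|_{L^2(L(s,t))} \to 0$. As a preliminary, by approximating $\gr$ with the exhaustion kernels $\gr_j$ (which satisfy $\int_{L_j(s,t)}|\nabla \gr_j|^2 = t-s$ since $\overline{L_j(s,t)}$ is compact in $\Omega_j$), smooth convergence away from $o$ and Fatou give $\int_{L(s,t)}|\nabla \gr|^2 \le t-s < \infty$.

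To prove (i), I would fix $\tilde t > \ell$ so that $\{\gr \ge \tilde t\}$ is compact (Remark \ref{rem_useful}) and test $-\Delta \gr = \delta_o$ against the Lipschitz bump $\hat\eta_\eps(\gr)\,w_j \in \lip_c(M)$ with $\hat\eta_\eps(\tau) = \min\{(\tau-t)/\eps,\,1,\,(\tilde t - \tau)/\eps\}_+$. Since this function vanishes in a neighbourhood of $o$, the $\delta_o$-contribution drops out; expanding the gradient and applying the coarea formula as $\eps \downarrow 0$ (for a.e.\ $t, \tilde t$) reduces the identity to
\begin{equation*}
\int_{l(t)} w_j|\nabla \gr| = \int_{l(\tilde t)} w_j|\nabla \gr| - \int_{L(t,\tilde t)} \langle \nabla \gr, \nabla w_j\rangle.
\end{equation*}
As $j \to \infty$, the first term on the right tends to $\int_{l(\tilde t)}|\nabla \gr| = 1$ (the classical identity, available because $l(\tilde t)$ is compact and $w_j \to 1$ uniformly there), while the second vanishes by Cauchy--Schwarz combined with the preliminary bound and $\|\nabla w_j\|_{L^2(L(t,\tilde t))} \to 0$. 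Hence $\int_{l(t)} w_j|\nabla \gr| \to 1$. Combining $w_j \le 1$ with Fatou's lemma applied to the pointwise limit $w_j|\nabla \gr| \to |\nabla \gr|$ on $l(t)$ forces $\int_{l(t)}|\nabla \gr| = 1$; the identity $\int_{L(s,t)}|\nabla \gr|^2 = t-s$ then follows via coarea.

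For (ii) I would run the analogous argument with test function $u\,\chi^\eps(\gr)\,w_j$, where $\chi^\eps(\tau) = \min\{(\tau-s)/\eps,\,1,\,(t-\tau)/\eps\}_+$ is supported in $[s,t]$. On the compact set $\overline{L(s,t)} \cap \spt(w_j)$, $u$ is Lipschitz by hypothesis, so the test function is admissible and vanishes near $o$. Expanding, letting $\eps \downarrow 0$ via coarea, and using $\Delta \gr = 0$ on $L(s,t)$ (which avoids $o$) yields, for each $j$,
\begin{equation*}
\int_{l(t)} u w_j |\nabla \gr| - \int_{l(s)} u w_j |\nabla \gr| = \int_{L(s,t)} w_j \langle \nabla u, \nabla \gr\rangle + \int_{L(s,t)} u \langle \nabla \gr, \nabla w_j\rangle.
\end{equation*}
As $j \to \infty$, the last term vanishes by Cauchy--Schwarz using $\|u\|_\infty<\infty$, $\nabla \gr \in L^2(L(s,t))$, and $\|\nabla w_j\|_{L^2} \to 0$; the penultimate converges to $\int_{L(s,t)} \langle \nabla u, \nabla \gr\rangle$ by dominated convergence with $\|w_j\|_\infty \le 1$ and $|\langle \nabla u, \nabla \gr\rangle|\in L^1(L(s,t))$; and on the left, part (i) yields $|\nabla \gr|\in L^1(l(s)\cup l(t))$, so dominated convergence with dominant $\|u\|_\infty|\nabla \gr|$ produces the desired limit. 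Rewriting $\langle \nabla u, \nabla \gr\rangle = \diver(u \nabla \gr)$ on $L(s,t)$ finally delivers \eqref{teodiver}.

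The main obstacle is precisely the non-compactness of $L(s,t)$ when $t \le \ell$: direct integration by parts and ball exhaustions fail because one cannot control the Dirichlet energy of a naive spatial cutoff. The Royden cutoffs $w_j$ supply exactly what is needed, since they collapse to $1$ on $L(s,t)$ while having arbitrarily small $L^2$-gradient there; the structural inputs making everything work are the disjointness $\overline{L(s,t)}^*\cap \partial_H M = \emptyset$ (where $s > 0$ and non-parabolicity enter) and the preliminary energy bound $\int_{L(s,t)}|\nabla \gr|^2 \le t-s$ that renders the Cauchy--Schwarz error estimate effective.
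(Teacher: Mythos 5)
Your proposal is correct and rests on the same key mechanism as the paper (Royden cutoffs $w_j$ whose Dirichlet energy vanishes asymptotically on the relevant sublevel region, combined with Cauchy--Schwarz error control), but the derivation of the normalization $\int_{l(t)}|\nabla\gr|=1$ in part $(i)$ is genuinely different. The paper tests the divergence theorem with $\psi^2$ to obtain the identity
\[
\int_{L(s,t)}|\nabla\gr|^2 = t\int_{l(t)}|\nabla\gr| - s\int_{l(s)}|\nabla\gr|,
\]
combines it with the coarea formula to deduce that $f(t)=\int_{l(t)}|\nabla\gr|$ satisfies $\int_s^t f = tf(t)-sf(s)$, hence $f'\equiv 0$, and finally pins down $f\equiv 1$ at a compact level $t>a$. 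You instead directly transport the value $1$ from a compact level $l(\tilde t)$ (with $\tilde t>\ell$, so $\{\gr\ge\tilde t\}$ is compact by Remark~\ref{rem_useful}) down to $l(t)$ by testing $-\Delta\gr=\delta_o$ against the vertical bump $\hat\eta_\eps(\gr)\,w_j$ and using Fatou in the limit. This is arguably more direct, as it avoids the ODE step entirely; the paper's route, on the other hand, produces the useful structural identity \eqref{bueno!} along the way. For part $(ii)$, the two arguments coincide modulo your extra $\eps$-regularization $\chi^\eps(\gr)$, which plays the role of the paper's implicit divergence-theorem boundary terms. One small imprecision worth flagging: in the preamble you build the Royden cutoff for $\overline{L(s,t)}^*$, but in the proof of $(i)$ the cutoff must be identically $1$ on $\overline{L(t,\tilde t)}^*$ (or on $\overline{\{\gr\ge t\}}^*$). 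The same construction applies verbatim, since $\min\{\gr,\tilde t\}\ge t>0$ on $\{\gr\ge t\}$ still forces disjointness from $\partial_H M$, so the fix is immediate; but as written the cutoff covers the wrong annulus. You also need the $0\le w_j\le 1$ truncation for the Fatou step (to get both inequalities $\int_{l(t)}|\nabla\gr|\le 1$ and $\ge 1$), and while the claim that Stampacchia truncation preserves BD-convergence is true, it deserves a one-line justification (Serrin's lemma gives $\nabla w = 0$ a.e.\ on $\{w=0\}\cup\{w=1\}$, so the truncated gradients converge in $L^2$ by dominated convergence). These are repairs of detail, not gaps in the strategy.
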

\begin{proof}
	By the Coarea formula along the level sets of $\gr$, for every $\psi \in \lip_c(\overline{L(s,t)})$
	\begin{equation}\label{coarea1}
		\infty > \int_{L(s,t)} \psi^2|\nabla \gr|^2 = \int_s^t \left[\int_{l(z)}\psi^2|\nabla \gr|\right]\di z.
	\end{equation}
	Therefore, letting $\psi \uparrow 1$, by the monotone convergence theorem and since $\min\{\gr,t\} \in \roy(M)$, we deduce
	\[
	\int_{l(z)}|\nabla \gr| < \infty \qquad \text{for a.e. } z \in \R^+.
	\]
	Next, by Sard's theorem choose $s < t$ so that $l(s)$ and $l(t)$ are smooth hypersurfaces. Applying the divergence theorem and since the outward pointing normal on $l(t)$ is $\nu=\nabla \gr/|\nabla \gr|$ we get
	\begin{equation}\label{intgreenpr}
		\begin{array}{lcl}
			\disp \int_{L(s,t)}\psi^2 |\nabla \gr|^2 & = & \disp t\int_{l(t)}\psi^2(\partial_\nu \gr) - s\int_{l(s)}\psi^2(\partial_\nu \gr) - 2 \int_{L(s,t)} \psi \gr \langle \nabla \psi, \nabla \gr\rangle \\[0.4cm]
			& = & \disp t\int_{l(t)}\psi^2|\nabla \gr| - s\int_{l(s)}\psi^2|\nabla \gr| - 2 \int_{L(s,t)} \psi \gr \langle \nabla \psi, \nabla \gr\rangle.
		\end{array}
	\end{equation}
	Since $\overline{L(s,t)} \subset M^*$ is disjoint from the harmonic boundary, as explained above we can choose $w \in \roy_0(M)$ identically equal to $1$ on $L(s,t)$ and a sequence $\{w_j\} \subset C^\infty_c(M)$ satisfying \eqref{propwj}. Then, for $\psi=w_j$,
	\begin{equation}\label{essemplici}
		\left|\int_{L(s,t)} \psi \gr \langle \nabla \psi, \nabla \gr\rangle \right| \le Ct \int_{L(s,t)}|\nabla w_j||\nabla \gr| \le Ct \big[D_{L(s,t)}(w_j)\big]^{\frac{1}{2}} \big[D_{L(s,t)}(\gr)\big]^{\frac{1}{2}}.
	\end{equation}
	By \eqref{propwj} and since $w\equiv 1$ on $L(s,t)$, we have 
	\[
	D_{L(s,t)}(w_j) \le D_M(w_j - w) \to 0 \qquad \text{as } \, j \to \infty. 
	\]
	Letting $j \to\infty$ and applying Lebesgue Convergence Theorem to the boundary terms in \eqref{intgreenpr} with the aid of \eqref{intfinitofront}, we deduce the equality
	\begin{equation}\label{bueno!}
		\disp \int_{L(s,t)}|\nabla \gr|^2 = t\int_{l(t)}|\nabla \gr| - s\int_{l(s)}|\nabla \gr|.
	\end{equation}
	The locally integrable function
	$$
	f(s) = \int_{l(s)}|\nabla \gr| \qquad \text{thus satisfies}  \qquad \int_s^t f(z) \di z = t f(t) - s f(s),
	$$
	hence $tf(t)$ is differentiable almost everywhere, and differentiating the above identity with respect to $t$ we conclude that $f'(t)=0$. To show that $f(t) \equiv 1$, by Remark \ref{rem_useful} we choose $a>0$ so that $\{\gr > a\}$ has compact closure, whence $l(t)$ is relatively compact for each $t>a$. Choosing one such $t$ for which $l(t)$ is a smooth hypersurface, as shown earlier in this section, the identity
	\[
	\int_{l(t)} |\nabla \gr| = 1
	\]
	holds, thereby establishing the first part of \eqref{intfinitofront}. The second follows by the coarea formula.\\
	To show the validity of $(ii)$, we use again the approximating sequence $\{w_j\}$. Again by the divergence theorem with test function $\psi = uw_j \in \lip_c(\overline{L(s,t)})$,
	\begin{equation}\label{primadiver}
		\int_{L(s,t)} \diver(u w_j \nabla \gr) = \int_{l(t)} u w_j |\nabla \gr| - \int_{l(s)} u w_j |\nabla \gr|.
	\end{equation}
	Since $\gr$ is harmonic, the left-hand side can be rewritten as follows:
	\begin{equation}\label{passsatecni}
			 \int_{L(s,t)} \diver(u w_j \nabla \gr)  = \disp \int_{L(s,t)} u \langle \nabla w_j, \nabla \gr\rangle + \int_{L(s,t)} w_j \langle \nabla u, \nabla \gr\rangle.  
	\end{equation}

	By an estimate analogous to \eqref{essemplici}, since $u \in L^\infty(M)$ the first term goes to zero as $j\to \infty$. We now deal with the second one. From $$|w_j\langle \nabla u, \nabla \gr \rangle| \le C|\nabla u||\nabla \gr|,$$ and using \eqref{intelocale}, by Cauchy-Schwarz inequality we conclude that $|\nabla u||\nabla \gr| \in L^1(L(s,t))$ and we can apply Lebesgue Dominated Theorem to deduce
	\begin{equation}\label{off}
		\begin{array}{lcl}
			\disp \int_{L(s,t)} \diver(u w_j \nabla \gr) & = & \disp o(1) + 
			\int_{L(s,t)} w_j \langle \nabla u, \nabla \gr\rangle \\[0.4cm]
			& \to & \disp \int_{L(s,t)} \langle \nabla u, \nabla \gr\rangle = \int_{L(s,t)} \diver(u \nabla \gr),
		\end{array}
	\end{equation}
	where the last line again follows by the harmonicity of $\gr$. Finally, letting $j\to \infty$ in \eqref{primadiver} with the aid of \eqref{passsatecni} and \eqref{off}, and using $(i)$ to guarantee the convergence of the boundary integrals, we conclude the desired \eqref{teodiver}.
\end{proof}

\subsection{Fake distance and gradient estimates}\label{sectionfakedistance}

Given a non-parabolic manifold $(M^n,g)$ with Green kernel $\gr$ centered at $o \in M$, a \textit{fake distance} associated to $\gr$ can be defined by fixing a comparison model $M_{\Hc}$. The latter is constructed as follows: let $0 \le \Hc \in C(\R^+_0)$ and let $h \in C^2(\R^+_0)$ be the solution of 
\begin{equation}\label{eq_jacobi}
\left\{ 
\begin{array}{ll}
h'' - \Hc h = 0 & \quad \text{on } \, \R^+, \\[0.2cm]
h(0)=0, \ \ h'(0)=1. &	
\end{array}\right.
\end{equation}
Define $M_{\Hc}$ as being $\R^n$ with metric, in polar coordinates at the origin $0$, 
\[
g_{\Hc} = \di r^2 + h(r)^2 g_{\Sph},
\]
where $g_{\Sph}$ is the round metric of the unit sphere $\Sph^{n-1}$. For instance,   $M_1$  is  the hyperbolic space of curvature $-1$ and $h(r) = \sh(r)$. Note that the sectional curvatures of $M_\Hc$ satisfy
\[
\Sec(X \wedge \nabla r) = -\Hc(r), \qquad \Sec(X \wedge Y) = \frac{1-h'(r)^2}{h(r)^2}
\]
for each $X,Y$ perpendicular to $\nabla r$ and linearly independent.  The functions
\[
v_{\Hc}(r) = |\Sph^{n-1}| h(r)^{n-1}, \qquad V_{\Hc}(r) = \int_0^r v_{\Hc}(s)\di s
\]
describe, respectively, the volume of geodesic spheres and balls centered at $0$. By a result of Varapoulos \cite{varapoulos}, $M_{\Hc}$ is non-parabolic if and only if 
\begin{equation}\label{eq_model_nonparab}
	\int_1^\infty \frac{\di s}{v_{\Hc}(s)} < \infty.
\end{equation}
In this case, the minimal positive Green kernel $\sgrh$ of $M_{\Hc}$ centered at $0$ has the expression
\[
\sgrh(r) = \int_r^\infty \frac{\di s}{v_\Hc(s)},
\]
Note that \eqref{eq_model_nonparab} always holds if $\Hc \ge 1$ (which will be our case of interest), since by Sturm comparison $h(r) \ge \sh(r)$. Now, the fake distance $b: M \backslash \{o\} \rightarrow \mathbb{R}^{+}$ with origin $o$ and model $M_\Hc$ is implicitly defined via the identity
\begin{equation}\label{def_fake}
\gr(x) \doteq \sgrh\big(b(x)\big) = \int_{b(x)}^\infty \frac{\di s}{v_{\Hc}(s)}.
\end{equation}
We have that $b$ is well-defined, positive and smooth on $M \backslash \{o\}$. Moreover, since $\gr$ diverges as $x \to o$ then $b$ can be extended by continuity with $b(o) = 0$. Further, $b$ is proper if and only if $\gr(x) \to 0$ as $x$ diverges. 
By rephrasing $(i)$ in Proposition \ref{prop_diver} in terms of $b$, we have the following identities:
\begin{equation}\label{eq_int_nabla_b}
	\int_{\{b=t\}} |\nabla b| = v_{\Hc}(t), \qquad \int_{\{ s \le b \le t\}} |\nabla b|^2 = V_{\Hc}(t) - V_{\Hc}(s),
\end{equation}
regardless of the properness of $b$. Moreover, differentiating $b$ twice, 
\begin{equation}\label{eq_delta_b}
\Delta b = \frac{v'_{\Hc}(b)}{v_{\Hc}(b)}|\nabla b|^2 = (n-1) \frac{h'(b)}{h(b)}|\nabla b|^2.
\end{equation}
Up until now, the model $M_{\Hc}$ need not have any link to $M$. Relating $M$ to $M_{\Hc}$ via curvature comparison, one is able to get sharp estimates for $b$; in particular, the following result was obtained by  Colding \cite[Theorem 3.1]{colding} for $\Hc \equiv 0$, and by the first author with Rigoli and Setti in \cite[Theorem 2.19]{mrs} in the form below, see also \cite{bmpr}.

\begin{theorem}\cite{mrs}\label{teo_gradesti}
	Let $(M^n,g)$ be a complete, non-compact, non-parabolic manifold and let $r, \gr$ be the distance from a fixed origin $o$ and the Green kernel with pole at $o$, respectively. Assume that 
	\[
	\Ric \ge - (n-1)\Hc(r)
	\]
	with $0 \le \Hc \in C(\R^+_0)$ non-increasing. If \eqref{eq_model_nonparab} holds, then the fake distance defined by \eqref{def_fake} satisfies 
	\[
	|\nabla b| \le 1 \qquad \text{on } \, M \backslash \{o\}. 
	\]
	Moreover, $|\nabla b(x_0)| =1$ for some $x_0 \in M\backslash \{o\}$ if and only if $M$ is isometric to $M_{\Hc}$. 
\end{theorem}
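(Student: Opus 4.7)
The plan is to combine Laplacian comparison (to locate $b$ beneath the Riemannian distance $r$) with a Bochner--Kato computation plus maximum principle (to establish $|\nabla b|\le 1$, as well as its rigidity version). First, to show $b\le r$: since $\Hc$ is non-increasing, Laplacian comparison under $\Ric\ge -(n-1)\Hc(r)$ yields $\Delta r\le v_\Hc'(r)/v_\Hc(r)$ weakly on $M\setminus\{o\}$. Combined with the model identity $\sgrh''+(v_\Hc'/v_\Hc)\sgrh'=0$ and $\sgrh'<0$, a chain-rule computation gives $\Delta(\sgrh\circ r)\ge 0$ on $M\setminus\{o\}$. Since $\sgrh\circ r$ has the same leading singularity at $o$ as the minimal positive Green kernel $\gr$ and vanishes at infinity by non-parabolicity of the model, the minimum principle applied to $\gr+\varepsilon-\sgrh\circ r$ on the exhaustion $\{\Omega_j\}$, followed by $\varepsilon\to 0^+$, forces $\gr\ge \sgrh\circ r$, i.e.\ $b\le r$ on $M$.

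\emph{Bochner--Kato.} Setting $f\doteq|\nabla b|^2$ and inserting $\Delta b=(n-1)\frac{h'(b)}{h(b)}f$ into Bochner's formula, together with the Riccati identity $(h'/h)'=\Hc-(h'/h)^2$, I would obtain
\begin{equation*}
\tfrac12\Delta f=|\Hess b|^2+(n-1)\tfrac{h'(b)}{h(b)}\langle\nabla f,\nabla b\rangle+(n-1)\Bigl[\Hc(b)-\tfrac{h'(b)^2}{h(b)^2}\Bigr]f^2+\Ric(\nabla b,\nabla b).
\end{equation*}
At any critical point of $f$ one has $\Hess b(\nabla b,\cdot)=\tfrac12\nabla f=0$, so $\nabla b$ lies in the kernel of $\Hess b$, and the refined Kato inequality $|\Hess b|^2\ge(\Delta b)^2/(n-1)=(n-1)(h'/h)^2(b)f^2$ is available. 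Combining with $\Ric(\nabla b,\nabla b)\ge -(n-1)\Hc(r)f$, at a maximum of $f$ the display simplifies to $0\ge(n-1)f[\Hc(b)f-\Hc(r)]$, whence $f\le \Hc(r)/\Hc(b)\le 1$ by the comparison $b\le r$ and monotonicity of $\Hc$.

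\emph{Globalization and rigidity.} Since $M$ is noncompact, I would replace the honest maximum with the Omori--Yau weak maximum principle, available under the assumed Ricci bound (a version due to Pigola--Rigoli--Setti suffices since $\Hc$ is bounded): this produces a sequence $\{x_k\}$ with $f(x_k)\to\sup f$, $|\nabla f|(x_k)\to 0$, and $\Delta f(x_k)\le 1/k$; feeding these into the Bochner--Kato chain and using $b(x_k)\le r(x_k)$ gives $\sup f\le 1$. For the rigidity, $|\nabla b|(x_0)=1$ means $f$ attains an interior maximum, and the strong maximum principle applied to $1-f\ge 0$ (a supersolution of the elliptic inequality above) forces $f\equiv 1$. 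All inequalities then saturate: $\Hess b=\frac{h'(b)}{h(b)}(g-db\otimes db)$ and $\Ric(\nabla b,\nabla b)=-(n-1)\Hc(r)$; since $|\nabla b|\equiv 1$ and $b(o)=0$, $b$ coincides with the Riemannian distance from $o$, and integrating the rigid Hessian structure along the flow of $\nabla b$ identifies $(M,g)$ isometrically with $(M_\Hc,g_\Hc)$.

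The main obstacle in this program is the global step: the Omori--Yau principle must be invoked in a form tailored to the nonautonomous curvature bound, and the rigidity conclusion requires propagating pointwise equality across the full manifold by integrating the rigid Hessian structure along geodesics of $\nabla b$, despite the level sets of $b$ potentially failing to be compact. A secondary delicate point is the minimum-principle comparison in Step 1, which on a non-parabolic $M$ must be implemented with the $\varepsilon$-trick on the exhausting domains to account for the fact that $\gr$ need not vanish at infinity.
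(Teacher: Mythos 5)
Your Step 1 (the comparison $b\le r$) and the Bochner--Kato computation at an honest interior critical point are essentially correct, but the globalization step has two genuine gaps. The Omori--Yau weak maximum principle you invoke requires, as part of its hypothesis, that $\sup_M f<\infty$ for $f=|\nabla b|^2$; this is not given and is close to the content of the estimate itself. When $\Hc$ is bounded away from zero one can patch this: the Cheng--Yau gradient estimate gives $|\nabla\gr|\le C\gr$ away from $o$ (using $\Ric\ge-(n-1)\Hc(0)$), whence $|\nabla b|=v_\Hc(b)|\nabla\gr|\le Cv_\Hc(b)\sgrh(b)\le C'$, since $v_\Hc\sgrh$ is bounded when $\Hc\ge c>0$. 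But the theorem allows $0\le\Hc$, and already for $\Hc\equiv 0$ one has $v_\Hc(b)\sgrh(b)\sim b/(n-2)$, which is unbounded, so this route does not close.

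The second, more fundamental, gap is that your terminal inequality $(n-1)f[\Hc(b)f-\Hc(r)]\le 0$ is vacuous whenever $\Hc(b)=0$, and degenerates along any Omori--Yau sequence with $b(x_k)\to\infty$ whenever $\Hc\to 0$ at infinity. In particular it yields nothing for $\Hc\equiv 0$, i.e.\ Colding's $\Ric\ge 0$ case, which the theorem must cover. This shows that a single global pass of the pointwise Bochner inequality through a weak maximum principle cannot be the whole proof; the argument in \cite{mrs} (following \cite{colding}) instead establishes the bound on the compact exhaustion by working with the Dirichlet kernels $\gr_j$ and controlling $|\nabla b_j|$ near $\partial\Omega_j$ against the model before passing to the limit, a rather different globalization mechanism. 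Two further points to address: along the Omori--Yau sequence $|\nabla f|(x_k)\to 0$ but is not zero, so the refined Kato inequality and the $\langle\nabla f,\nabla b\rangle$ term carry error terms that must be shown to vanish, which requires keeping $b(x_k)$ in a range where $h'(b)/h(b)$ is controlled; and in the rigidity step, $|\nabla b|\equiv1$ with $b(o)=0$ only gives the eikonal bound $b\le r$, so concluding $b=r$ and then $(M,g)\cong(M_\Hc,g_\Hc)$ requires actually integrating the saturated Hessian identity $\Hess b=\tfrac{h'(b)}{h(b)}\bigl(g-db\otimes db\bigr)$ along the flow of $\nabla b$ rather than invoking $|\nabla b|=1$ alone.
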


\begin{remark}
An analogous result holds when the fake distance is defined using the kernel of the $p$-Laplacian $\Delta_p$. In fact, as 
$p \to 1$, the corresponding fake distance construction provides a method to obtain solutions to the inverse mean curvature flow on $M$ (see \cite{mrs} for further details).
\end{remark}

The next result, a particular case of \cite[Proposition 1]{bmmv}, relates $M_\Hc$ to the hyperbolic space $M_1$ when $\Hc$ is close enough to $1$ at infinity.

\begin{proposition}\label{prop_pinching}
	Assume that $1 \le \Hc \in C(\R^+_0)$ satisfies 
	\[
	\Hc(s) - 1 \in L^1(\infty), 
	\]
	and define
	\begin{equation}\label{rel_useful}
		\zeta(s) \doteq \frac{h'(s)}{h(s)} - \frac{\ch(s)}{\sh(s)}.
	\end{equation}
	Then, 
		\begin{equation}\label{intebounds_zeta}
			\zeta(0^+) = 0, \quad \zeta \ge 0, \quad \zeta(s) \in L^1(\R^+), \quad \zeta(s) \to 0 \ \text{ as } \, s \to \infty,
		\end{equation}
and $h(s)/\sh(s) \to C$ as $s \to \infty$ for some $C \in \R^+$.
\end{proposition}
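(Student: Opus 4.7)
The plan is to reduce the second-order equation $h''-\Hc h=0$ to a single linear first-order ODE for $\zeta$, derive a closed integral formula, and read off all four statements about $\zeta$ (together with the convergence of $h/\sh$) from it.

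Setting $\phi=h'/h$ and $\phi_1=\ch/\sh$, the classical Riccati substitution gives $\phi'+\phi^2=\Hc$ and $\phi_1'+\phi_1^2=1$. Subtracting, $\zeta=\phi-\phi_1$ satisfies
\[
\zeta' + (\phi+\phi_1)\zeta = \Hc - 1,
\]
whose natural integrating factor is $h\cdot\sh$, because $(\log(h\sh))'=\phi+\phi_1$. Thus $(h\sh\,\zeta)'=h\sh(\Hc-1)$. A Taylor expansion at $0$ (using $h(0)=0$, $h'(0)=1$, $h''(0)=0$) yields $\zeta(s)=O(s)$, which simultaneously establishes $\zeta(0^+)=0$ and makes the boundary term $h\sh\zeta$ vanish at the origin. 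Integrating,
\[
h(s)\sh(s)\,\zeta(s) = \int_0^s h(t)\sh(t)\bigl(\Hc(t)-1\bigr)\di t,
\]
which is non-negative since $\Hc\ge 1$; this forces $\zeta\ge 0$, equivalently $Q\doteq h/\sh$ is non-decreasing, with $Q(0^+)=1$ from the same Taylor expansion.

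The last two claims on $\zeta$ require extracting quantitative decay. Writing $h=Q\sh$ and using $Q(t)\le Q(s)$ for $t\le s$ in the formula above,
\[
\zeta(s) \le \frac{1}{\sh^2(s)}\int_0^s \sh^2(t)\bigl(\Hc(t)-1\bigr)\di t.
\]
Combining with the elementary bound $\sh(t)/\sh(s)\le C\,e^{t-s}$ valid for $0\le t\le s$ and $s$ bounded below, I obtain
\[
\zeta(s)\le C^2\int_0^s e^{2(t-s)}\bigl(\Hc(t)-1\bigr)\di t.
\]
Integrability $\zeta\in L^1(\R^+)$ then follows by Fubini, since $\int_t^\infty e^{2(t-s)}\di s=\tfrac12$ pairs with $\Hc-1\in L^1$. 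Pointwise decay $\zeta(s)\to 0$ follows by splitting the convolution at $s/2$: the part on $[0,s/2]$ is controlled by $e^{-s}\|\Hc-1\|_{L^1}\to 0$, while the part on $[s/2,s]$ is controlled by the tail $\int_{s/2}^\infty(\Hc-1)\di t\to 0$. Finally, from $\zeta=(\log Q)'$ and $Q(0^+)=1$, the finiteness of $\int_0^\infty\zeta$ gives $Q(s)\to C\doteq \exp\bigl(\int_0^\infty\zeta\bigr)\in[1,\infty)$.

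The main technical obstacle I expect is balancing the $L^1$-integrability of $\zeta$ with uniform decay from a single formula. The observation that unlocks this is that the integrating factor $h\sh$ of the linear equation for $\zeta$ is comparable, up to the non-decreasing factor $Q$, to the purely hyperbolic weight $\sh^2$; the exponential growth of $\sh$ then converts the identity into a clean exponential convolution against $\Hc-1$, after which Fubini and the tail-splitting argument are routine.
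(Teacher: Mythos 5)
The paper does not supply its own proof of this proposition: it is stated as ``a particular case of [Proposition 1]{bmmv}'' and the reader is referred there. So there is nothing in the present text to compare against; your argument has to be judged on its own.

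Your proof is correct and complete. The Riccati substitution $\phi=h'/h$, $\phi_1=\coth$ is the right move; the linear equation $\zeta'+(\phi+\phi_1)\zeta=\Hc-1$ with integrating factor $h\sh$ is exactly the identity one wants, and the Taylor analysis at $0$ (using $h''(0)=\Hc(0)h(0)=0$ to get $h(s)=s+O(s^3)$, $\sh(s)=s+O(s^3)$, hence $\zeta(s)=O(s)$) correctly kills the boundary term at the origin and gives $\zeta(0^+)=0$. Monotonicity of $Q=h/\sh$ follows from $\zeta=(\log Q)'\ge 0$, and discarding the factor $Q(t)/Q(s)\le 1$ yields the clean convolution bound $\zeta(s)\le C^2\int_0^s e^{2(t-s)}(\Hc(t)-1)\,\di t$ for $s$ bounded below. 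Fubini gives $\zeta\in L^1$ (the contribution on a bounded initial interval is trivially finite since $\zeta$ is continuous there), the split at $s/2$ gives $\zeta(s)\to 0$, and $Q(s)\to\exp\bigl(\int_0^\infty\zeta\bigr)\in[1,\infty)$ gives the last claim. One minor stylistic point: you should state explicitly that $\Hc-1\in L^1(\infty)$ together with continuity at $0$ gives $\Hc-1\in L^1(\R^+)$, which you tacitly use both in the Fubini step and in the tail estimate $\|\Hc-1\|_{L^1}<\infty$. Otherwise the argument is tight and, to the best of my knowledge, is essentially the same integrating-factor comparison that underlies the cited result in \cite{bmmv}.
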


\noindent In particular, under the assumptions of Proposition \ref{prop_pinching}, we have
\begin{equation}\label{boundv}
C^{-1} \sh(r)^{n-1} \le v_{\Hc}(r) \le C \sh(r)^{n-1},  \quad 
\end{equation}
for some constant $C \in \R^+$.

\subsection{Laplacian and its domain}

Consider the Laplace-Beltrami operator $\Delta$, originally defined by
\[
\Delta\colon C_c^\infty(M)\to C_c^\infty(M),
\quad
u\mapsto \mathrm{div}(\nabla u)
\]
It is well known (see, e.g., \cite{Davies1989, ReedSimon1,kato}) that on a complete manifold, $\Delta$ is essentially self-adjoint. Consequently, the domain $D(\Delta)$ of its unique self-adjoint extension can be characterized in two equivalent ways:

\[
\begin{array}{lcl}
	D(\Delta) & = & \disp \Bigr\{
	u \in L^2(M) 
	\;:\;
	\Delta u \in L^2(M) \ \text{distributionally} \Bigr\} \\[0.4cm]
	& = & \disp \Bigl\{ u \in L^2(M) \ : \ \exists \{u_j\} \subset C^\infty_c(M) \ \  \text{ such that } \, \begin{array}{l} 
		u_j \to u \ \ \text{ in } \, L^2\\
		\{\Delta u_j\} \ \text{converges in $L^2$}
	\end{array} 
	\Bigr\},
\end{array}
\]
and $\Delta u = \lim_{j \to \infty} \Delta u_j$. By integrating $\Delta u_j$ against $\varphi^2 u_j$, where $\varphi$ a smooth cut-off and, applying Young's inequality, one obtains
\[
\frac{1}{2}\int_M \varphi ^2|\nabla u_j|^2\le -\int_M \varphi ^2u_j\Delta u_j+2\int_M u_{j}^2|\nabla \varphi |^2.
\]

Choosing the cut-offs
\[
\varphi(x)=\varphi_R(x)=\min\Bigl\{1,\frac{R-\di(x,B_R)}{R}\Bigr\}_+,
\]
and letting \(R\to\infty\) (followed by \(j\to\infty\)) shows that \(D(\Delta)\subset H^1(M)\) with
\[
\|\nabla u\|_2^2\le2\|u\|_2\,\|\Delta u\|_2.
\]

We recall Weyl’s characterization of the spectrum $\sigma(M)$:

\begin{lemma}{\cite[Lemma 4.1.2]{Davies1989}} A number $\lambda \in \mathbb{R}$ lies in $\sigma(M)$ if and only if there exists a sequence of nonzero functions $u_j \in D(\Delta)$ such that
	\[
	\| \Delta u_j + \lambda u_j \|_2 = o(\| u_j \|_2) \quad \text{as } j \to +\infty.
	\]
\end{lemma}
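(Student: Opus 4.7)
The final statement is the familiar Weyl approximate-eigenvalue criterion, transcribed for the self-adjoint operator $-\Delta$ acting on $D(\Delta) \subset L^2(M)$. My plan is to deduce it from the spectral theorem together with the standard characterization of the spectrum of a self-adjoint operator as the set of points at which the resolvent fails to be bounded; no geometric input beyond the essential self-adjointness of $\Delta$ (already recalled above the statement) is needed.

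For the ``if'' direction, I would normalize the given sequence by setting $v_j = u_j / \|u_j\|_2$, so that $\|v_j\|_2 = 1$ while $\|(-\Delta - \lambda) v_j\|_2 \to 0$. If $\lambda \notin \sigma(M)$, the resolvent $(-\Delta - \lambda)^{-1}$ is a bounded operator on $L^2(M)$ with operator norm $[\mathrm{dist}(\lambda, \sigma(M))]^{-1}$; applying it to $(-\Delta - \lambda) v_j$ forces
\[
1 = \|v_j\|_2 \le \|(-\Delta - \lambda)^{-1}\| \cdot \|(-\Delta - \lambda) v_j\|_2 \longrightarrow 0,
\]
a contradiction.

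For the converse, I would invoke the spectral theorem for the self-adjoint operator $-\Delta$ and denote by $\{E(B)\}_{B \in \mathcal{B}(\R)}$ its projection-valued spectral measure. By the classical equivalence $\lambda \in \sigma(-\Delta) \iff E\big((\lambda - 1/j, \lambda + 1/j)\big) \neq 0$ for every $j \ge 1$, one may pick, for each $j$, a unit vector $u_j$ in the range of $E\big((\lambda - 1/j, \lambda + 1/j)\big)$. The function $t \mapsto t$ is bounded on the support of the scalar measure $\di \langle E(t) u_j, u_j \rangle$, so $u_j \in D(-\Delta) = D(\Delta)$, and the functional calculus yields
\[
\|\Delta u_j + \lambda u_j\|_2^2 \; = \!\! \int_{(\lambda - 1/j, \lambda + 1/j)} \!\! |t - \lambda|^2 \, \di \langle E(t) u_j, u_j\rangle \; \le \; \frac{1}{j^2},
\]
producing the required Weyl sequence. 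The argument is purely operator-theoretic, so there is essentially no real obstacle; the only point that merits a line of justification is that the $u_j$ obtained via spectral projections genuinely belong to the distributional domain $D(\Delta)$ used throughout the paper, which is automatic from the equivalence of the two characterizations of $D(\Delta)$ recalled in the previous paragraph of the excerpt.
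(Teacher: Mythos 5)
The paper does not prove this lemma; it simply cites Davies \cite[Lemma 4.1.2]{Davies1989}. Your argument is the standard spectral-theorem proof of Weyl's criterion and is correct in both directions: the ``if'' part correctly uses that $\|(-\Delta-\lambda)^{-1}\|=\operatorname{dist}(\lambda,\sigma(M))^{-1}$ when $\lambda$ is in the resolvent set, and the ``only if'' part correctly produces a Weyl sequence from nonzero spectral projections onto shrinking intervals around $\lambda$ (noting, as you do, that such vectors automatically lie in $D(\Delta)$ since their spectral measure has bounded support). This is essentially the argument one finds in Davies, so there is nothing to flag.
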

\noindent We call such $\{u_j\}$ a sequence of approximating eigenfunctions for $\lambda$. Note that Weyl's criterion does not require $u_j$ to have compact support, and this flexibility will be quite helpful to construct them. 

Assume that $M$ is complete and that an end $E \subset M$ satisfies   \begin{equation}\label{eq_end}
\vol(E) = \infty, \qquad \lambda(E) > 0.
\end{equation}
As mentioned above, by Li-Wang's theory \cite{liwang_pos_1} $M$ is non-parabolic. Let $\gr$ be a minimal positive Green kernel with pole at a fixed origin $o$, and let $r$ be the distance from $o$. 

\begin{theorem}\label{thm_test}
Let $M$ be a complete manifold satisfying 
\begin{equation}\label{eq_ipo2}
	\Ric \ge -(n-1)\Hc(r)
\end{equation}
for some $0 \le \Hc \in C(\R^+_0)$ non-increasing. Assume that there exists an end $E$ satisfying \eqref{eq_end}, and let $b$ be the fake distance with origin $o \not \in \overline{E}$ and model $M_\Hc$. Define $R_0 = \max_{\partial E} b$. Then, for every $\phi \in C^2_c((R_0,\infty))$ the function $\phi(b) \mathbb{1}_E$ belongs to $D(\Delta)$ and is not identically zero whenever $\phi \not \equiv 0$. 
\end{theorem}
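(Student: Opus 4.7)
My plan is to verify directly that $u := \phi(b)\mathbb{1}_E$ is smooth, lies in $L^2(M)$ with classical (hence distributional) Laplacian in $L^2(M)$, and is nonzero when $\phi \not\equiv 0$. Fix $R_0 < s < t$ with $\supp \phi \subset [s,t]$. Since $b \le R_0 < s$ on $\partial E$, $\phi(b)$ vanishes in a neighborhood of $\partial E$; combined with the zero extension on $M \setminus E$ and the smoothness of $b$ on $M \setminus \{o\}$ (using $o \notin \overline{E}$), this gives $u \in C^2(M)$ with $\supp u \subset \{s \le b \le t\} \cap E$. A direct computation from \eqref{eq_delta_b} yields
\[
\Delta u = \Bigl[\phi''(b) + (n-1)\frac{h'(b)}{h(b)}\phi'(b)\Bigr]|\nabla b|^2 \quad \text{on } E,
\]
with bracket term bounded and compactly supported in $(R_0,\infty)$. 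Combining $|\nabla b|\le 1$ (Theorem \ref{teo_gradesti}) with the identity $\int_{\{s\le b\le t\}}|\nabla b|^2 = V_\Hc(t) - V_\Hc(s) < \infty$ from Proposition \ref{prop_diver}$(i)$ immediately yields $\|\Delta u\|_2, \|\nabla u\|_2 < \infty$.

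The main obstacle is establishing $u \in L^2(M)$, since $\supp u$ need not have finite volume a priori. Here is where $\lambda(E) > 0$ enters. Choose $s_0,t_0$ with $[\sgrh(t),\sgrh(s)] \subset (s_0,t_0)$, so $\overline{L(s_0,t_0)} \cap \partial_H M = \emptyset$ in $M^*$ (since every $\min\{\gr,c\} \in \roy_0(M)$ vanishes on $\partial_H M$, forcing $\gr = 0$ there). By property $(i)$ of the Royden compactification, there exists $w \in \roy_0(M)$ with $w \equiv 1$ on $L(s_0,t_0) \supset \{s \le b \le t\}$, together with a sequence $w_j \in C^\infty_c(M)$ such that $w_j \to w$ in $\tau_{\BD}$; thus $\|w_j\|_\infty$ and $D_M(w_j)$ are uniformly bounded and $w_j \to 1$ locally uniformly on $L(s_0,t_0)$. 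Since $u$ vanishes near $\partial E$, the product $uw_j$ lies in $C^\infty_c(E)$, and the variational characterization of $\lambda(E)$ gives
\[
\int_E u^2 w_j^2 \le \lambda(E)^{-1}\int_E |\nabla(uw_j)|^2 \le 2\lambda(E)^{-1}\Bigl(\|w_j\|_\infty^2\,\|\nabla u\|_2^2 + \|\phi\|_\infty^2\, D_M(w_j)\Bigr),
\]
a bound uniform in $j$. As $w_j \to 1$ pointwise on $\supp u$, Fatou's lemma yields $\|u\|_{L^2(E)} < \infty$, so $u \in L^2(M)$ and therefore $u \in D(\Delta)$.

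Finally, for nontriviality: $M$ is non-parabolic by Li--Wang (since $\vol(E)=\infty$ and $\lambda(E) > 0$), hence $\partial_H M \neq \emptyset$ and $\gr$ vanishes at its points; since the nonparabolic end $E$ accumulates at $\partial_H M$ in $M^*$, there is a sequence $x_n \in E$ with $\gr(x_n) \to 0$, i.e.\ $b(x_n) \to \infty$. Continuity of $b$ on the connected set $E$, combined with $b$ taking values arbitrarily close to $R_0$ near $\partial E$, forces $b(E) \supset (R_0,\infty)$; in particular $b(E)$ meets $\supp\phi$, so $\phi(b)\mathbb{1}_E \not\equiv 0$ whenever $\phi \not\equiv 0$.
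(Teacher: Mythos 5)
Your proof is correct and reaches the same conclusion, but via a genuinely different route for the $L^2$ step. The paper first proves that the set $\{s\le b\le t\}\cap E$ has \emph{finite Riemannian volume}, by testing the Poincar\'e inequality on $E$ with $\psi\gr_j$ (where $\psi$ is a \emph{fixed} cutoff and the exhausting Dirichlet kernels $\gr_j$ already have compact support, so $\psi\gr_j\in\lip_c(E)$ for free), and then deduces both $\|u\|_2<\infty$ and $\|\Delta u\|_2<\infty$ from this finite volume. You instead bound $\|\Delta u\|_2$ and $\|\nabla u\|_2$ directly from the coarea identity $\int_{\{s\le b\le t\}}|\nabla b|^2=V_\Hc(t)-V_\Hc(s)$ (using $|\nabla b|\le1$, hence $|\nabla b|^4\le|\nabla b|^2$), and then obtain $\|u\|_2<\infty$ by testing the Poincar\'e inequality with $uw_j$, where $\{w_j\}\subset C^\infty_c(M)$ is a $\tau_{\BD}$-approximating sequence for a function in $\roy_0(M)$ equal to $1$ on a slab $L(s_0,t_0)\supset\supp u$; the uniform bounds on $\|w_j\|_\infty$ and $D_M(w_j)$ coming from $\tau_{\BD}$-convergence make the estimate uniform in $j$, and Fatou finishes. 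This is a valid alternative: the Royden machinery you invoke is already set up in Section 2, and your route avoids proving the finite-volume statement explicitly. The one place where you assert a fact without justification is the nontriviality step: you claim that the non-parabolic end $E$ accumulates at $\partial_H M$ in $M^*$, from which $\inf_E\gr=0$. This is true but is not an immediate consequence of the properties (i)--(iii) listed in the paper (one would argue, e.g., that if $\overline E\cap\partial_H M=\emptyset$ then property (i) produces $\eta\in\roy_0(M)$ with $\eta\equiv1$ on $E$, forcing $\mathrm{cap}(\partial E,E)=0$, contradicting non-parabolicity of $E$). The paper instead deduces $\inf_E\gr=0$ more concretely by comparing each $\gr_j$ with the barrier harmonic function of \cite[Proposition 7.3]{prs}. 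Supplying either a reference or the short capacity argument would close this remaining small gap.
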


\begin{proof}
	
	Observe that by construction $\phi(b) \mathbb{1}_E$ is a smooth function vanishing in a neighbourhood of $\partial E$. Moreover, it is not identically zero if so is $\phi$. Indeed, since $E$ is a non-parabolic end it is known that $\inf_E \gr = 0$ (it is enough to compare each $\gr_j$ to the harmonic function $h$ in \cite[Proposition 7.3]{prs} and to pass to limits), therefore $b(E) \supset [R_0,\infty)$, which is enough to conclude that $\phi(b) \mathbb{1}_E$ is nontrivial. We first prove that the sets $\{s \le b \le t\} \cap E$ have finite volume. For this purpose, we let $\gr_j$ be an increasing sequence of Green kernels corresponding to an exhaustion $\Omega_j \uparrow M$ with $\partial E \cap \Omega_j$, extended with zero away from $\Omega_j$. Fix $R \in (R_0,s)$ so that $\phi \in C^2_c((R,\infty))$ and choose $\psi \in C^\infty(M)$ vanishing in a neighbourhood of $M \backslash E$, satisfying $\psi \equiv 1$ in $E \cap \{b \ge R\}$ and with $|\nabla \psi|$ compactly supported. Denoting by $a = \max_{\partial E} \gr \ge \max_{E} \gr_j$, testing the Poincaré inequality with $\psi \gr_j \in \lip_c(E)$ and using Young's inequality we obtain 
	\[
	\lambda(E) \int_{\{ 0 < \mathcal{G}_j \le a \}} \psi^2\mathcal{G}_j^2 \leq 2\int_{\{ 0 < \mathcal{G}_j \le a \}} \psi^2|\nabla \mathcal{G}_j|^2 + \gr_j^2 |\nabla \psi|^2 \le 2a + a^2 \int_M|\nabla \psi|^2.
	\]
	Letting \( j \to \infty \), we find
	\[
	\lambda(E) \int_{\{ 0 < \mathcal{G} \le a \}} \psi^2\mathcal{G}^2 \leq 2a + a^2 \int_M|\nabla \psi|^2.
	\]
	Next, on the set $\{s \le b \le t\} \cap E$ it holds $\psi \equiv 1$ and $a_1 \le \gr \le a_2$ for some positive constants $a_1 \le a_2 < a$. Thus, 
	\[
	|\{s \le b\le t\} \cap E| \le |\{a_1 \le \gr \le a_2\} \cap E| \le a_1^{-2} \int_{\{ 0 < \mathcal{G} \le a \}} \psi^2\mathcal{G}^2 \le \frac{1}{a_1^2 \lambda(E)} \left[ 2a + a^2 \int_M|\nabla \psi|^2\right],
	\]
	as claimed.	Next, since $\phi$ has compact support in $(R_0,\infty)$, we fix $R_0<s<t$ so that ${\rm spt} \phi \subset [s,t]$. Therefore,  
	\[
	\int_E \phi(b)^2 \le \|\phi\|^2_\infty |\{ s \le b \le t\} \cap E| < \infty,
	\]
	hence $\phi(b)\mathbb{1}_E \in L^2(M)$. From 
	\begin{equation}\label{auxil_nabla_phi}
	\Delta \phi(b) = \phi''(b)|\nabla b|^2 + \phi'(b)\Delta b = \left[ \phi''(b) + \phi'(b)\frac{v'_\Hc(b)}{v_\Hc(b)}\right]|\nabla b|^2,
	\end{equation}
	since $ 0 \leq \Hc \leq \kappa$, for some constant $\kappa>0$, due to being non-increasing, Sturm's comparison Theorem guarantees that
	\[
	\frac{n-1}{b} = \frac{v'_0(b)}{v_0(b)} \le \frac{v'_\Hc(b)}{v_\Hc(b)} \le \frac{v'_\kappa(b)}{v_\kappa(b)} = (n-1)\sqrt{\kappa} \coth(\sqrt{\kappa}b). 
	\]
Besides, by Theorem \ref{teo_gradesti}, $|\nabla b| \leq 1$. Substituting this into $\eqref{auxil_nabla_phi}$ one obtains that there exists $C(s,\kappa,t)$ so that $|\Delta \phi(b)| \le C\|\phi\|_{C^2(\R)} \mathbb{1}_{\{s \le b \le t\} \cap E}$, and we deduce
	\[
	\Delta \big(\phi(b)\mathbb{1}_E\big) \in L^2(M).
	\]
	Thus, $\phi(b)\mathbb{1}_E \in D(\Delta)$.
	\end{proof}

\section{Proof of the Main Result}\label{sec_proof}

We begin with the following integral estimate, whose proof is inspired by \cite{liwang_pos_1}. For simplicity, we fix an end $E$, and for $0 \leq t < s$,  we define the set

$$
A_{t,s} \doteq \{ x \in E: t\le b(x) \le s\}
$$
In the assumptions on $M$ stated below, we saw that the fake distance $b$ can be constructed.

\begin{lemma}\label{mainlemma} Let $(M^n,g)$ be a complete manifold such that 
	\begin{equation}\label{eq_Riccibound}
	\Ric \geq -(n-1)\Hc(r),
	\end{equation}
where $1 \le \Hc \in C(\R^+_0)$ is non-increasing. If there exists an end $E$ such that
	\[
	\vol(E) = \infty, \qquad \lambda(E) = \frac{(n-1)^2}{4},
	\]
     then, for any $s > t > R_0 + 1$ with $R_0 = \mathrm{max}_{\partial E} b$, it holds
\begin{equation}\label{mainlemmaeq}
\int_{A_{t,s}} \gr^2 e^{(n-1)b} \big(1 - |\nabla b|^2\big) \, \le \, C, 
\end{equation}
where $C$ is a constant only depending on $n,R_0$. If furthermore
\begin{equation}\label{eq_inte_H_lem}
	\int^\infty [\Hc(r)-1] \di r < \infty,
\end{equation}
then it holds 
\begin{equation}\label{mainlemmaeq_2}
	\int_{A_{t,s}} e^{-(n-1)b} \big(1 - |\nabla b|^2\big) \, \le \, C, 
\end{equation}
for some $C = C(n,R_0,\Hc)$.
\end{lemma}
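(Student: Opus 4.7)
\emph{Strategy.} I test the Poincaré inequality
\[
\lambda(E)\int_E\varphi^2\le\int_E|\nabla\varphi|^2, \qquad \varphi\in H^1_0(E),
\]
with the radial-in-$b$ function
\[
\varphi=\chi(b)\,e^{(n-1)b/2}\,\gr,
\]
where $\chi\in C^\infty_c((R_0,\infty))$ satisfies $\chi\equiv 1$ on $[s,t]$, $\spt\chi\subset[s-1,t+1]$ and $|\chi'|\le 2$. Since $\chi=0$ whenever $b\le R_0$ and $b\le R_0$ on $\partial E$, the function $\varphi$ vanishes in a neighbourhood of $\partial E$; combining the finite-volume bound for $E\cap\{s-1\le b\le t+1\}$ from Theorem~\ref{thm_test} with a distance cut-off on $M$ shows that $\varphi$ can be approximated in $H^1$ by compactly supported Lipschitz functions on $E$, so $\varphi\in H^1_0(E)$.

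\emph{Key identity.} Because $o\not\in\overline{E}$, the function $\gr$ is harmonic on $E$. Writing $\varphi=f\gr$ with $f=\chi(b)\,e^{(n-1)b/2}$, the standard computation $\diver(f^2\gr\nabla\gr)=2f\gr\langle\nabla f,\nabla\gr\rangle+f^2|\nabla\gr|^2$, integrated on $E$ via an approximation of the type supplied by Proposition~\ref{prop_diver}, yields the classical harmonic-function identity
\[
\int_E|\nabla(f\gr)|^2=\int_E\gr^2|\nabla f|^2.
\]
Expanding $|\nabla f|^2=|\nabla b|^2 e^{(n-1)b}\bigl[(\chi')^2+(n-1)\chi\chi'+\lambda\chi^2\bigr]$ with $\lambda=(n-1)^2/4$ and inserting into Poincaré, the $\lambda\chi^2$ contribution on the right combines with $\lambda\!\int\!\chi^2\gr^2e^{(n-1)b}$ on the left to produce precisely the factor $1-|\nabla b|^2$:
\[
\lambda\int_E\chi^2\gr^2e^{(n-1)b}\bigl(1-|\nabla b|^2\bigr)\le\int_E\gr^2e^{(n-1)b}|\nabla b|^2\bigl[(\chi')^2+(n-1)\chi\chi'\bigr].
\]

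\emph{Bounding the right-hand side.} Using the coarea identity \eqref{eq_int_nabla_b} on $M$, for every continuous $\Phi$ one has
\[
\int_M\Phi(b)\gr^2e^{(n-1)b}|\nabla b|^2=\int_0^\infty\Phi(r)\sgrh(r)^2e^{(n-1)r}v_{\Hc}(r)\,dr.
\]
From $\Hc\ge 1$ together with Riccati comparison, $h'/h\ge\coth\ge 1$, so $h(s)\ge h(b)e^{s-b}$ for $s\ge b$, which gives
\[
\sgrh(b)v_{\Hc}(b)=\int_b^\infty\bigl(h(b)/h(s)\bigr)^{n-1}ds\le\frac{1}{n-1}.
\]
Sturm comparison ($h\ge\sh$) also yields $\sgrh\le\sgr_1\le C_{n,R_0}\,e^{-(n-1)r}$ for $r\ge R_0$. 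Combining, $\sgrh(r)^2e^{(n-1)r}v_{\Hc}(r)\le C_{n,R_0}$ uniformly for $r\ge R_0$. Since $(\chi')^2+(n-1)\chi\chi'$ is $L^\infty$-bounded by a constant depending only on $n$ and supported on a set of measure $\le 2$, the right-hand side is $\le C(n,R_0)$, proving \eqref{mainlemmaeq}.

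\emph{From \eqref{mainlemmaeq} to \eqref{mainlemmaeq_2}.} Under the additional assumption \eqref{eq_inte_H_lem}, Proposition~\ref{prop_pinching} gives $h(s)/\sh(s)\to c_0\in\R^+$, hence $v_{\Hc}\le c_1\sh^{n-1}$ and, by integration, $\sgrh(r)\ge c\,e^{-(n-1)r}$ for $r\ge R_0$ and some $c>0$. Thus $\gr^2e^{(n-1)b}\ge c^2 e^{-(n-1)b}$ on $A_{t,s}$, and \eqref{mainlemmaeq_2} follows from \eqref{mainlemmaeq} by dividing through by $c^2$. The main technical point is justifying the harmonicity identity and $\varphi\in H^1_0(E)$ in the absence of properness of $b$; both are handled using the cut-off and Royden-algebra framework recalled in Section~\ref{sec_prelim}.
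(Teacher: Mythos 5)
Your proof follows the same route as the paper's: test the Poincaré inequality on $E$ with $\varphi=\chi(b)e^{(n-1)b/2}\gr$, use harmonicity of $\gr$ (via Proposition~\ref{prop_diver}) to reduce to $\int\gr^2|\nabla f|^2$, isolate the $\frac{(n-1)^2}{4}\chi^2\gr^2e^{(n-1)b}(1-|\nabla b|^2)$ term, push the remainder onto the support of $\chi'$, and bound it by the coarea formula together with $\sgrh\le Ce^{-(n-1)r}$ and $\sgrh v_\Hc\le C$ (your Riccati derivation of the latter is a minor variant of the paper's citation of \cite[Proposition 4.12]{bmr}), with Proposition~\ref{prop_pinching} supplying the lower bound $\gr^2e^{(n-1)b}\ge Ce^{-(n-1)b}$ for the second inequality. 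Aside from swapping the roles of $s$ and $t$ (an inconsistency already present between the lemma statement and the paper's own proof), your argument is the same as the paper's.
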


\begin{proof}[Proof of the Lemma \ref{mainlemma}]
Let $\phi \in C^2_c((R_0,\infty))$, and fix $[t_0,s_0] \subset (R_0, \infty)$ such that $\supp \phi \subset (t_0,s_0)$. Consequently, $\supp \phi(b) \subset \{ t_0 < b < s_0\}$ or, equivalently, $\supp \phi(b) \subset \{a_1 < \mathcal{G} < a_2\}= L(a_1,a_2)$ for positive constants $a_1,a_2$. Thus, 
\begin{equation}\label{eqdeftestpsi}
  \psi(x) \doteq \phi(b(x))\mathbb{1}_E(x)
  \end{equation}
is supported in $L(a_1,a_2) \cap E$. By Theorem \ref{thm_test} we have $\psi \in D(\Delta)$, and \eqref{teodiver} implies

\begin{equation}\label{auxl1}   
 \int_{M} \mathrm{div}( \psi^2 \mathcal{G} \nabla \mathcal{G}) = \int_{E \cap L(a_1,a_2)} \mathrm{div}( \psi^2 \mathcal{G} \nabla \mathcal{G}) = \int_{l(a_2) \cap E}\psi^2 \mathcal{G}|\nabla \mathcal{G}| - \int_{l(a_1) \cap E} \psi^2\mathcal{G}|\nabla \mathcal{G}| = 0.
 \end{equation}

Moreover, since $\mathcal{G}$ is harmonic and \eqref{auxl1} holds, we have

\begin{eqnarray} \label{eqinitialremark}
\int_{E} \left| \nabla \left( \psi \mathcal{G} \right) \right|^2 = \int_{M} \left| \nabla \left( \psi \mathcal{G} \right) \right|^2 \nonumber
&=& \int_{M} \left| \nabla \psi \right|^2 \mathcal{G}^2 
+ 2 \int_{M} \psi \mathcal{G} \langle \nabla \psi, \nabla \mathcal{G} \rangle 
+ \int_{M} \psi^2 \left| \nabla \mathcal{G} \right|^2 \nonumber \nonumber \\
&=& \int_{M} |\nabla \psi|^2 \mathcal{G}^2 
+ \int_{M} \mathrm{div}( \psi^2 \mathcal{G} \nabla \mathcal{G}) \nonumber  \\
&=&\int_{E} |\nabla \psi|^2 \mathcal{G}^2.
\end{eqnarray}

Choose $\phi(t) = \eta(t) e^{(n-1)t/2}$, where $\eta \in C^2_c((R_0,\infty))$ will be specified later. Define $\zeta(x) = \eta(b(x))$. By our assumption on $\lambda(E)$ and by \eqref{eqinitialremark}, the function \(\psi \mathcal{G}\) (with \(\psi\) as in \eqref{eqdeftestpsi}) satisfies
\[
\frac{(n-1)^2}{4}\int_E \zeta^2 e^{(n-1)b} \mathcal{G}^2 \leq \int_E \big|\nabla(\zeta e^{\frac{n-1}{2}b})\big|^2 \mathcal{G}^2.
\]
Expanding the right-hand side, we get

\begin{eqnarray*}
\frac{(n-1)^2}{4}\int_E \zeta^2 e^{(n-1)b} \mathcal{G}^2 &\leq& \int_E e^{(n-1)b} |\nabla \zeta|^2 \mathcal{G}^2 + (n-1) \int_E e^{(n-1)b} \zeta \langle \nabla \zeta, \nabla b \rangle \mathcal{G}^2 \\
& & + \frac{(n-1)^2}{4} \int_E \zeta^2 e^{(n-1)b} |\nabla b|^2 \mathcal{G}^2,
\end{eqnarray*}
equivalently,
\begin{equation}\label{aux_ineq1}
\frac{(n-1)^2}{4} \int_E \zeta^2 \mathcal{G}^2  e^{(n-1)b} \big(1 - |\nabla b|^2 \big)  
  \leq \int_E  e^{(n-1)b} \mathcal{G}^2 |\nabla \zeta|^2+(n-1) \int_E e^{(n-1)b} \zeta \langle \nabla \zeta, \nabla b \rangle \mathcal{G}^2.
\end{equation}
Choose $\eta$ satisfying
\[
0\le\eta\le1,\quad \eta\equiv0\text{ outside }(t-1,s+1),\quad \eta\equiv1\text{ on }(t,s),
\]
\[
|\eta'|\le 2 \, \text{ on } \, [t-1,t] \cup [s,s+1],
\] 
By \eqref{aux_ineq1} we have

\begin{eqnarray}\label{auxil2}
\nonumber\frac{(n-1)^2}{4} \int_{A_{t,s}} \mathcal{G}^2  e^{(n-1)b} \big(1 - |\nabla b|^2 \big)  
 &  \leq  & C\left( \int_{ A_{t-1,t}} \mathcal \gr^2 e^{(n-1)b}|\nabla b|^2 + \int_{ A_{s,s+1}} \mathcal \gr^2e^{(n-1)b}|\nabla b|^2 \right),
\end{eqnarray}
for some constant $C$ only depending on $n$. On the other hand, using the definition of $b$, the coarea formula and \eqref{eq_int_nabla_b}, for any $1 < a_1 < a_2$ we get

\begin{equation} 
\label{intcoarea} \int_{A_{a_1,a_2}} \mathcal{G}^2e^{(n-1)b}|\nabla b|^2 =  \int_{a_1}^{a_2} \sgrh^2(t)e^{(n-1)t}\left( \int_{\{b=t\} \cap E}|\nabla b|\right) \di t \le \int_{a_1}^{a_2} \sgrh^2(t) e^{(n-1)t}v_{\Hc}(t) \di t, \end{equation}
where we have used the identity \eqref{eq_int_nabla_b} in the last equality. Next, since $H \ge 1$, by Sturm comparison $h(t) \ge \sh(t)$ and thus
\[
\sgrh(t) \le \int_t^\infty \frac{\di s}{|\mathbb{S}^{n-1}|(\sh s)^{n-1}} \le Ce^{-(n-1)t},
\]
for some constant $C>0$ only depending on $n,R_0$. Furthermore, since $h(s)/\sh(s)$ is non-decreasing we can use \cite[Proposition 4.12]{bmr} to deduce that
\[
v_\Hc(t) \sgrh(t) = v_\Hc(t) \int_{t}^\infty \frac{\di s}{v_\Hc(s)} \le \sh^{n-1}(t) \int_{t}^\infty \frac{\di s}{(\sh s)^{n-1}} \le C.
\]
Summarizing,  

\begin{equation}
    \int_{a_1}^{a_2} \sgrh^2(t)e^{(n-1)t}v_{\Hc}(t) \di t \leq C(a_2-a_1), 
\end{equation}

\noindent for some constant $\displaystyle C>0$ only depending on $n,R_0$. Inserting this into \eqref{auxil2} we obtain \eqref{mainlemmaeq}. If the integrability condition \eqref{eq_inte_H_lem} holds, we can use Proposition \ref{prop_pinching} to estimate
\[
\gr^2e^{(n-1)b} = \sgrh^2(b) e^{(n-1)b} = e^{(n-1)b} \left[\int_b^\infty \frac{\di s}{v_\Hc(s)}\right]^2 \ge C e^{(n-1)b} \left[\int_b^\infty \frac{\di s}{(\sh s)^{n-1}}\right]^2 \ge C e^{-(n-1)b},
\]
which yields \eqref{mainlemmaeq_2}.
\end{proof}

Now we are ready to prove the main result.

\begin{proof}[Proof of Theorem \ref{mainthm}]
    We aim to show that for every \( \displaystyle \lambda>\frac{(n-1)^2}{4}\), there exists a sequence \(\{u_j\} \subset C_0^\infty(M)\) satisfying
\begin{equation}\label{critweil}
\|\Delta u_j + \lambda u_j\|_2 \;=\; o\bigl(\|u_j\|_2\bigr)
\quad \text{as } j \to +\infty.
\end{equation}

\noindent To this end, consider
\( \displaystyle
\beta=\sqrt{\lambda-\frac{(n-1)^2}{4}}
\) and define
\[
\psi(s)=\frac{e^{i\beta s}}{\sqrt{v_{\Hc}(s)}}.
\]
Observe that \(\psi \) solves the differential equation
\begin{equation}\label{eq12}
\psi''+\frac{v'_{\Hc}}{v_{\Hc}}\psi '+\lambda\psi =a\psi ,
\end{equation}
where
\[
a(s)=\frac{(n-1)^2}{4}+\frac{1}{4}\biggl(\frac{v_{\Hc}'(s)}{v_{\Hc}(s)}\biggr)^2-\frac{1}{2}\frac{v_{\Hc}''(s)}{v_{\Hc}(s)}.
\]
From \eqref{eq_jacobi} and Proposition \ref{prop_pinching}, we have
\begin{equation}\label{eq_approx_basic}
\begin{array}{lcl}
	\disp \frac{v_{\Hc}' (s)}{v_{\Hc}(s)} & = & \disp (n-1) \coth s + o(1) = n-1 + o(1), \\[0.5cm]
	\disp \frac{v_{\Hc}''(s)}{v_{\Hc}(s)} & = & \disp (n-1)H + (n-1)(n-2) \left( \frac{h'}{h} \right)^2 \\[0.5cm]
	& = & \disp (n-1) + (n-1)(n-2) \coth^2 s + o(1) = (n-1)^2 + o(1), \\[0.3cm]
\end{array}
\end{equation}
whence \(a(s)\to0\) as \(s\to\infty.\) Next, fix $t_0 >>1$ and \(t,s, S \in \mathbb{R}\) such that
\[
t_0+1<t<s\le S-1,
\]
and consider a cutoff function \(\eta\in C_0^\infty(\mathbb{R})\) satisfying
\[
0\le\eta\le1,\quad \eta\equiv0\text{ outside }(t-1,S),\quad \eta\equiv1\text{ on }(t,s),
\]
together with the derivative bounds
\[
|\eta''|+|\eta'|\le C_0 \, \text{ on } \, [t-1,t]
\quad\text{and}\quad
|\eta''|+|\eta'|\le\frac{C_0}{S-s}\, \text{ on } \, [s,S],
\]
\noindent for some absolute constant $C_0$. Let \(b\) denote the fake distance function on \(M\). Define 
\[
  u_{t,s}(x) \;=\; \psi\bigl(b(x)\bigr)\,\eta\bigl(b(x)\bigr) \mathbb{1}_E(x).
\]
Then a straightforward computation shows

\begin{eqnarray*}
\Delta u_{t,s}+\lambda u_{t,s}
&=&\bigl(\eta''\psi+2\eta'\psi'+\eta\psi''\bigr)|\nabla b|^2
+(\eta'\psi+\eta\psi')\Delta b
+\lambda\eta\psi\\
&=&\bigl(\eta''\psi+2\eta'\psi'-\tfrac{v'}{v}\eta\psi'-\lambda\eta\psi+a\eta\psi\bigr)\bigl(|\nabla b|^2-1\bigr)
+a\eta\psi\\
&&+(\eta'\psi+\eta\psi')\Bigl(\Delta b-\frac{v'}{v}\Bigr)
+\Bigl(\eta''\psi+2\eta'\psi'+\eta'\psi\frac{v'}{v}\Bigr),
\end{eqnarray*}
where we used equation \eqref{eq12}. Note that there is an absolute constant \(c\) such that
\[
|\psi|+|\psi'|\le \frac{c}{\sqrt{v_{\Hc}}}.
\]
Thus, defining the weighted measure $\displaystyle \di \mu = \frac{\di x}{v_{\Hc}}$ (where $\di x$ is the Riemannian measure) and using the definition of $A_{a_1,a_2}$, we can write
\begin{equation}\label{eq13}
\displaystyle \|\Delta u_{t,s}+\lambda u_{t,s}\|_{2}^{2}
\leq \displaystyle
C\Biggl(
\int_{A_{t-1,S}}
\bigl[
(1-|\nabla b|^{2})^{2}
+\Bigl(\Delta b -\tfrac{v'_{\Hc}}{v_{\Hc}}\Bigr)^{2}
+a^2(b)
\bigr]
\,\di \mu
\;+\;
\frac{\mu(A_{s,S})}{(S-s)^{2}}
\;+\;
\mu(A_{t-1,t})
\Biggr),
\end{equation}
for some suitable constant $C(c,C_0)$. Using \eqref{eq_delta_b} and the first in \eqref{eq_approx_basic}, there exists a constant $C$ depending on $t_0$ but not on $s,t$ so that 

\[
\left(\Delta b-\frac{v_{\Hc}'}{v_{\Hc}}\right)^2
=\left(1-|\nabla b|^2\right)^{2}
\left(\frac{v_{\Hc}'}{v_{\Hc}}\right)^{2} \le C\left(1-|\nabla b|^2\right)^2.
\]

\noindent Setting
\[
F(t) \doteq \sup_{\sigma\in(t-1,\infty)}a^2(\sigma),
\]
we can rewrite \eqref{eq13} as
\begin{equation}\label{eq14}
\|\Delta u_{t,s}+\lambda u_{t,s}\|_{2}^2
\leq
C\Bigl(
\int_{A_{t-1,S}}(1-|\nabla b|^2)^2 \di \mu
+F(t)\,\mu(A_{t-1,S})
+\frac{\mu(A_{s,S})}{(S-s)^2}
+\mu(A_{t-1,t})
\Bigr).
\end{equation}
Since \(|\nabla b|^2\le 1\), the above estimate further simplifies to
\begin{equation}\label{eq15}
\|\Delta u_{t,s}+\lambda u_{t,s}\|_{2}^2
\leq
C\Bigl(
\int_{A_{t-1,S}}(1-|\nabla b|^2)\,\di \mu
+F(t)\,\mu(A_{t-1,S})
+\frac{\mu(A_{s,S})}{(S-s)^2}
+\mu(A_{t-1,t})
\Bigr).
\end{equation}
Under our hypotheses, Proposition~\ref{prop_pinching} applies directly. We will now bound each term appearing in the right‑hand side. To begin, Proposition~\ref{prop_pinching} yields
\[
C^{-1} e^{(n-1)b} \le v_\Hc(b) \le C e^{(n-1)b} \qquad \text{on } \, \{b \ge R_0\},
\]
for some constant $C$ depending on $n,R_0$, and thus 
\[
C^{-1} e^{-(n-1)b} \di x \le \di \mu \le C e^{-(n-1)b} \di x.
\]

Applying Lemma \ref{mainlemma} we therefore have

\begin{equation}\label{eqest1}
     \int_{A_{t-1,S}} (1 - |\nabla b|^2) \, \di \mu \le C \int_{A_{t-1,S}} e^{-(n-1)b} \big(1 - |\nabla b|^2\big)\di x \leq  C_2.
\end{equation}
Next, note that
\begin{equation}\label{eq_nablabyv}
    \|u_{t,s}\|_2^2 \ge \mu(A_{t,s}) 
    = \int_{A_{t,s}} \frac{\di x}{v_{\Hc}(b)} 
    \geq \int_{A_{t,s}} \frac{|\nabla b|^2}{v_{\Hc}(b)} \, \di x 
    = \int_t^s \frac{1}{v_{\Hc}(\sigma)} \left( \int_{\{b = \sigma\} \cap E} |\nabla b| \right) \di \sigma,
\end{equation}
\noindent where we used that $1 \geq |\nabla b|^2$ and the the coarea formula. Choose $R_0' \in (R_0, R_0+1)$ as a regular value of $\gr$, and for $\sigma \ge t$ define $a_1,a_\sigma$ to satisfy $\{a_\sigma \le \gr \le a_1\} =\{ R_0' \le b \le \sigma\}$. Applying Proposition \ref{prop_diver}, $(ii)$ with the choice $\bar u = \psi(b) \mathbb{1}_E$, where $\psi \in C^\infty_c(R_0,\infty)$ is identically $1$ on $[R_0',\sigma]$, we deduce
\[
\begin{array}{lcl}	
0 & = & \disp \int_{L(a_\sigma,a_1)} \diver( \bar u \nabla \gr) = \int_{l(a_1)} \bar u |\nabla \gr| - \int_{l(a_\sigma)} \bar u |\nabla \gr| \\[0.5cm]
& = & \disp \frac{1}{v_{\Hc}(R_0')} \int_{\{b=R_0'\} \cap E}|\nabla b| - \frac{1}{v_{\Hc}(\sigma)} \int_{\{b=\sigma\} \cap E}|\nabla b|.
\end{array}
\]
Plugging into \eqref{eq_nablabyv}, we deduce the existence of a constant $C_3$ independent of $t,s$ such that 
\begin{equation}\label{equ} 
|| u_{t,s}||_2^2 \geq C_3(s-t). 
\end{equation}
 
On the other hand, 
\begin{equation}\label{mubound} \mu(A_{t,s}) = \int_{A_{t,s}} \frac{\di x}{v_{\Hc}(b)} = \int_{A_{t,s}}|\nabla b|^2\frac{\di x}{v_{\Hc}(b)} + \int_{A_{t,s}}{(1-|\nabla b|^2)}\frac{\di x}{v_{\Hc}(b)} \leq s-t + C_2, \end{equation}

\noindent where we used \eqref{eqest1} and \eqref{eq_nablabyv}. Using the estimates $\eqref{eqest1}$ and $\eqref{mubound}$ and setting $S = s+1$, then \eqref{eq15} becomes

\begin{eqnarray*}
     \| \Delta u_{t,s}+\lambda u_{t,s}\|_{2}^{2} & \leq & C(C_2 + F(t)(s+1-(t-1)+ C_2) + (s+1-s + C_2) + (t-(t-1) + C_2)) \\
     & \leq & C_4 + C_4F(t)(s-t+C_3)
\end{eqnarray*}
where $C_4>0$ is a constant independent of $s$ and $t$. By the inequality $\eqref{equ}$, we have  $$ \frac{\| \Delta u_{t,s}+\lambda u_{t,s}\|_{2}^{2}}{|| u_{t,s} | | _2^2} \leq \frac{C_5}{s-t} + C_5F(t)\left(1 + \frac{C_3}{s-t}\right).
$$

\noindent Thus, since $\lim_{t \rightarrow \infty} F(t) = 0,$ we can conclude that

\begin{equation}\label{eq_liminf}
\liminf_{t \to \infty} \liminf_{s \to \infty} 
\frac{\|\Delta u_{t,s} + \lambda u_{t,s}\|_2^2}{\|u_{t,s}\|_2^2} = 0.
\end{equation}
We now construct a sequence of approximating eigenfunctions for \(\lambda\) as follows. Fix \(\varepsilon > 0\). By \eqref{eq_liminf}, there exists a divergent sequence \(\{t_i\}\) such that for all \(i \geq i_\varepsilon\),
\[
\liminf_{s \to +\infty} \frac{\| \Delta u_{t_i,s} + \lambda u_{t_i,s} \|_2^2}{\| u_{t_i,s} \|_2^2} < \frac{\varepsilon}{2}.
\]
For \(i = i_\varepsilon\), select a sequence \(\{s_j\}\) realizing this liminf. Then there exists \(j_\varepsilon = j_\varepsilon(i_\varepsilon, \varepsilon)\) such that for all \(j \geq j_\varepsilon\),
\begin{equation}\label{delta_u_varepsilon}
\| \Delta u_{t_{i_\varepsilon},s_j} + \lambda u_{t_{i_\varepsilon},s_j} \|_2^2 < \varepsilon \| u_{t_{i_\varepsilon},s_j} \|_2^2. 
\end{equation}
Define \(u_\varepsilon := u_{t_{i_\varepsilon},s_{j_\varepsilon}}\). By \eqref{delta_u_varepsilon}, from the family \(\{u_\varepsilon\}\) we can extract a sequence of approximating eigenfunctions for \(\lambda\), thus proving \(\lambda \in \sigma(M)\).

\end{proof}

\begin{remark}
	Theorem \ref{mainthm} relates to Elworthy $\&$ Wang's criterion in \cite[Theorem 1.1]{elworthy_wang} to guarantee that a half-line belongs to $\sigma_\ess(M)$. The criterion is stated in terms of an unbounded function $\gamma$ whose gradient and Laplacian are controlled in an integral sense. As a matter of fact, our main tool results (Theorem \ref{thm_test} and Lemma \ref{mainlemma}) and the gradient estimate $|\nabla b| \le 1$ in \cite{mrs} could be used to check that the choice $\gamma = b$ satisfies all the assumptions in \cite{elworthy_wang}. However, we preferred to include a direct argument in the proof of Theorem \ref{mainthm} for the sake of completeness and to point out some technical details. 
\end{remark}

\end{document}